\begin{document}

\newtheorem{theorem}{Theorem}
\newtheorem{lemma}[theorem]{Lemma}
\newtheorem{claim}[theorem]{Claim} 
\newtheorem{cor}[theorem]{Corollary}
\newtheorem{prop}[theorem]{Proposition}
\newtheorem{definition}{Definition}
\newtheorem{question}[theorem]{Open Question}
\def\bE{{\mathbf E}}
\def\mand{\qquad\mbox{and}\qquad}
\def\scr{\scriptstyle}
\def\\{\cr}
\def\({\left(}
\def\){\right)}
\def\[{\left[}
\def\]{\right]}
\def\<{\langle}
\def\>{\rangle}
\def\fl#1{\left\lfloor#1\right\rfloor}
\def\rf#1{\left\lceil#1\right\rceil}
\def\cA{{\mathcal A}}
\def\cB{{\mathcal B}}
\def\cC{{\mathcal C}}
\def\cD{{\mathcal D}}
\def\cE{{\mathcal E}}
\def\cF{{\mathcal F}}
\def\cG{{\mathcal G}}
\def\cH{{\mathcal H}}
\def\cI{{\mathcal I}}
\def\cJ{{\mathcal J}}
\def\cK{{\mathcal K}}
\def\cL{{\mathcal L}}
\def\cM{{\mathcal M}}
\def\cN{{\mathcal N}}
\def\cO{{\mathcal O}}
\def\cP{{\mathcal P}}
\def\cQ{{\mathcal Q}}
\def\cR{{\mathcal R}}
\def\cS{{\mathcal S}}
\def\cT{{\mathcal T}}
\def\cU{{\mathcal U}}
\def\cV{{\mathcal V}}
\def\cW{{\mathcal W}}
\def\cX{{\mathcal X}}
\def\cY{{\mathcal Y}}
\def\cZ{{\mathcal Z}}
\def\N{{\mathbb N}}
\def\Z{{\mathbb Z}}

\def\eps{\varepsilon}
\def\mand{\qquad\text{and}\qquad}
\def\tS{S^*}
\def\tsigma{\widetilde \sigma}
\def\C{{\mathbb C}}
\def\F{{\mathbb F}}
\def\Fp{\F_p}
\def\Fq{\F_q}
\def\E{{\mathcal E}}
\def\e{{\bf e}}
\def\ep{\e_p}
\def\O{{\cO}}
\def\x{{\bf x}}
\def\y{{\bf y}}

\renewcommand{\vec}[1]{\mathbf{#1}}
\def\Or{{\mathcal O}}

\newcommand{\comm}[1]{\marginpar{%
\vskip-\baselineskip 
\raggedright\footnotesize
\itshape\hrule\smallskip#1\par\smallskip\hrule}}



\title{Exponential Sums over Points of 
Elliptic Curves}

\author{
 {\sc Omran Ahmadi}  \\
{School of Mathematics}\\{ Institute for Research in Fundamental Sciences}\\
{P.O. Box: 19395-5746, Tehran, Iran}\\
{\tt oahmadid@gmail.com}
\and
{\sc Igor E.~Shparlinski} \\
{Department of Pure Mathematics}\\
{ University of New South Wales, } \\
{Sydney, NSW 2052, Australia} \\
{\tt igor.shparlinski@unsw.edu.au}
}

\date{}

\maketitle

\begin{abstract}
We derive a new bound for some bilinear sums over points 
of an elliptic curve over a finite field. We use this bound
to improve a series of previous results on various 
exponential sums and some arithmetic problems 
involving   points on elliptic curves. 
\end{abstract}

\paragraph{Subject Classification (2010)}
\ Primary 11L07, 11T23 \ Secondary 11G20

\section{Introduction}

Let $q$ be a prime power and let $\E$ be an elliptic curve defined over 
a finite field $\F_q$ of $q$ elements of characteristic $p\ge 5$
given by an affine Weierstra\ss\ equation
$$\E:\quad Y^2=X^3+ AX+ B$$
with some $A,B \in \F_q$, 
see~\cite{ACDFLNV,BSS,Silv}.

We recall that the set of all points on $\E$ forms an abelian group, 
with the ``point at infinity'' $\cO$ as the neutral 
element,  and
we use $\oplus$ to denote the group operation. In particular, we 
sometimes work with group characters associated with this 
group.

As usual, we write every point $P \ne \cO$ on $\E$ 
as $P = (\x(P), \y(P))$. Let $\E(\F_q)$  denote the set of $\F_q$-rational
points on  $\E$. We recall that the celebrated result of 
Bombieri~\cite{Bomb} implies, in particular, an estimate of 
order $q^{1/2}$ for  exponential  sums with functions
from the function field of $\E$ taken over all points of 
$\E(\F_q)$. More recently, various character sums over points of 
elliptic curves have been considered in a 
number of papers, 
see~\cite{AhmShp,BFGS,Chen,ElMaSh,FarShp,KohShp,LanShp1,LanShp2,OstShp1,OstShp2,Shp1,Shp3,ShpVol}
and references therein. These estimates are motivated by various applications to such areas as
\begin{itemize}
\item pseudorandom number generators from
elliptic curves, see the most recent works~\cite{BOS,Chen,Liu,Mer0,Mer1,Mer2} 
and also the survey~\cite{Shp4};
\item randomness extractors from elliptic curves~\cite{CFPZ,CiSo};
\item analysing an attack on the Digital Signature Algorithm 
on elliptic curves~\cite{NgShp};
\item hashing to elliptic curves~\cite{FFSTV};
\item finding generators and the structure of the groups of points on 
elliptic curves~\cite{KohShp,ShpVol};
\item constructing some special bases related to quantum computing~\cite{ShpWint}.
\end{itemize}

We fix a nonprincipal additive character $\psi$ of $\F_q$.
All of our estimates are uniform with respect to the additive character  $\psi$.  

Let $G\in\E(\F_q)$ be a point of order $T$, in other words, $T$ is
the cardinality of the cyclic group $\langle G\rangle$ generated
by $G$ in $\E(\F_q)$. 

Given two sets  $\cA,\cB \subseteq \Z_T^*$, in the unit group of residue ring 
$\Z_T$ modulo $T$,  and arbitrary  
complex functions $\alpha$ and $\beta$ 
supported on $\cA$ and  $\cB$ with
$$
|\alpha_a| \le 1, \ a \in \cA, \mand  |\beta_b| \le 1, \ b \in \cB, 
$$
we consider the bilinear sums  of {\it multiplicative type\/}:
\begin{equation}
\label{eq:Sum U}
U_{\alpha, \beta} (\psi,\cA, \cB;G) = \sum_{a \in \cA} \sum_{b \in \cB} 
\alpha_a \beta_b\psi(x(abG)).
\end{equation} 

Furthermore, given two sets  $\cP,\cQ \subseteq \E(\F_{q})$ and arbitrary  
complex functions $\rho(P)$ and $\vartheta(Q)$ 
supported on $\cP$ and  $\cQ$ 
we consider the bilinear sums of {\it additive type\/}:
\begin{equation}
\label{eq:Sum V}
V_{\rho, \vartheta} (\psi,\cP, \cQ) = \sum_{P \in \cP} \sum_{Q \in \cQ} 
\rho(P)\vartheta(Q) \psi(x(P\oplus Q)).
\end{equation} 

Bounds of the  sums  
$U_{\alpha, \beta} (\psi,\cA, \cB;G)$ and $V_{\rho, \vartheta} (\psi,\cP, \cQ)$ are proved in~\cite{AhmShp, BFGS}
and~\cite{Shp1}, respectively, where several applications of these
bounds have been  shown.

Here we improve the bound of~\cite{Shp1} and use it with the bound 
of~\cite{AhmShp}, and also with some additional arguments, 
to refine a series of previous results.
In particular, we give improvements:
\begin{itemize}
\item of the elliptic curve version of the sum-product
theorem of~\cite{Shp2};

\item of the bound of character sums from~\cite{LanShp2} with
 sequences of points of cryptographic significance;

\item of the bound of  character  sums from~\cite{Shp3} with
linear combinations of $x(P)$ and $x(nP)$ for $P\in \E(\F_q)$.

\end{itemize}

Throughout the paper, any implied constants in the symbols $O$ and
$\ll$ may occasionally depend, where obvious, on the 
integer parameter $\nu\ge 1$ and real parameter $\varepsilon >0$, but are absolute otherwise. We recall that the
notations $A \ll B$ and  $A = O(B)$ are both equivalent to the
statement that the inequality $|A| \le c\,B$ holds with some
constant $c> 0$.

\section{Preparations}

\subsection{Single sums}

We recall the following special case of the bound of~\cite[Corollary~1]{KohShp}:
 
\begin{lemma}
\label{lem:Subgr}
Let $\E$ be an ordinary curve defined over $\Fq$ and let $G \in \E(\Fq)$ be a point of
order $T$. Then for any group character $\chi$ on $\E(\F_q)$. 
$$
   \sum_{n \in \Z_T}
\psi\( x\(n G\)\) \chi(G)  \ll q^{1/2}.
$$
\end{lemma}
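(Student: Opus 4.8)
The plan is to reduce the sum over the cyclic subgroup to sums over the whole group $\E(\Fq)$ via character orthogonality, and then to estimate each full-group sum by Bombieri's theorem. Write $\cG=\E(\Fq)$ and $\cH=\langle G\rangle$, a cyclic subgroup of order $T$, so that the quantity to be bounded is
$$
S=\sum_{P\in\cH}\chi(P)\,\psi(x(P)).
$$
Since $\chi$ is a character of $\cG$, its restriction to $\cH$ is a character of the cyclic group $\cH\cong\Z_T$; in particular only this restriction matters, and $\chi(nG)=\e_T(cn)$ for some integer $c$, where $\e_T(z)=\exp(2\pi i z/T)$. The main device is the orthogonality identity for the indicator of $\cH$ inside $\cG$, namely $\mathbf 1_{\cH}(P)=[\cG:\cH]^{-1}\sum_{\eta}\eta(P)$, where $\eta$ runs over the characters of $\cG$ trivial on $\cH$, a set of size exactly $[\cG:\cH]$. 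Substituting and interchanging summation gives
$$
S=\frac1{[\cG:\cH]}\sum_{\eta}\ \sum_{P\in\cG}(\chi\eta)(P)\,\psi(x(P))
=\frac1{[\cG:\cH]}\sum_{\eta}W(\chi\eta),
$$
where for a character $\lambda$ of $\cG$ I set $W(\lambda)=\sum_{P\in\cG}\lambda(P)\psi(x(P))$.

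The heart of the matter is the uniform bound $W(\lambda)\ll q^{1/2}$ for \emph{every} character $\lambda$ of $\cG$, and this is where I would invoke Bombieri's estimate. For the trivial character it is immediate that $\sum_{P\in\E(\Fq)}\psi(x(P))\ll q^{1/2}$, since $\psi$ is nonprincipal and $x$ is a nonconstant function in the function field of $\E$. For a nontrivial $\lambda$ one realizes the group character geometrically through the Lang covering $\cL=F-1\colon\E\to\E$, where $F$ is the $q$-power Frobenius: this is an unramified Galois cover of $\E$ whose deck group is precisely $\E(\Fq)$, so each $\lambda$ corresponds to a rank-one character of the cover and $\lambda(P)\psi(x(P))$ becomes a character of a nontrivial function along the curve. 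Bombieri's theorem then yields square-root cancellation, $|W(\lambda)|\ll q^{1/2}$, with an implied constant depending only on the bounded geometric data of $\E$.

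Combining the two steps closes the argument: the sum over $\eta$ has exactly $[\cG:\cH]$ terms, each $O(q^{1/2})$, whence
$$
|S|\le\frac1{[\cG:\cH]}\sum_{\eta}|W(\chi\eta)|\ll\frac1{[\cG:\cH]}\cdot[\cG:\cH]\cdot q^{1/2}=q^{1/2}.
$$
I expect the genuine obstacle to lie entirely in the full-group estimate $W(\lambda)\ll q^{1/2}$: one must give a clean geometric interpretation of the group character $\lambda$ and, crucially, verify that the associated function is nontrivial so that no main term survives and Bombieri applies. This is precisely the point at which the hypothesis that $\E$ is ordinary is used, to control the $p$-part of the character group and guarantee that the completed sums remain nondegenerate for every $\lambda$. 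Since this full-group bound is exactly the content of~\cite[Corollary~1]{KohShp}, a shorter alternative is simply to quote that corollary and specialize it to the cyclic subgroup $\langle G\rangle$.
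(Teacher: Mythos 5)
The paper does not actually prove this lemma: it is quoted verbatim as a special case of Corollary~1 of Kohel--Shparlinski~\cite{KohShp}, so there is no internal argument to compare against. What you have written is, in effect, the standard proof of that cited result, and it is sound as a reduction: expanding the indicator of $\cH=\langle G\rangle$ as $[\cG:\cH]^{-1}\sum_{\eta}\eta(P)$ over the $[\cG:\cH]$ characters $\eta$ of $\cG=\E(\F_q)$ trivial on $\cH$ is correct, the interchange of summation is harmless, and the final averaging does give $O(q^{1/2})$ provided the complete sums $W(\lambda)=\sum_{P\in\cG}\lambda(P)\psi(x(P))$ satisfy $W(\lambda)\ll q^{1/2}$ uniformly in $\lambda$. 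You are also right to read the displayed $\chi(G)$ as $\chi(nG)$; as written it is a constant of modulus one and the twist would be vacuous, whereas the application in Theorem~\ref{thm:BillSum-A} (a sum of $\psi(x(S))\overline{\chi(S)}$) requires the twisted version. The one caveat is that the entire difficulty of the lemma is concentrated in the uniform bound on $W(\lambda)$, which you treat in two sentences via the Lang isogeny $F-1$ and an appeal to Bombieri; that step is exactly the content of~\cite{KohShp} and needs genuine care (realizing $\lambda$ as a character of the unramified cover, checking nondegeneracy so no main term survives, handling characters whose order is divisible by $p$ via Artin--Schreier rather than Kummer covers). So your argument is a correct and more self-contained route than the paper's bare citation, but it does not eliminate the deep input --- it relocates it from the subgroup sum to the complete sum --- and your closing remark that one may simply quote~\cite[Corollary~1]{KohShp} is in fact precisely what the paper does.
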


\subsection{Bilinear sums of multiplicative type}

We recall the bound of~\cite[Theorem~2.1]{AhmShp} on the sums~\eqref{eq:Sum U}:

\begin{lemma}
\label{lem:BillSum-M}
Let $\E$ be an ordinary elliptic
curve defined over $\F_q$, and let $G \in \E(\F_q)$ be a point of
order $T$. Then,  for any fixed integer $\nu\ge 1$, uniformly over all 
nontrivial additive characters $\psi$
of $\F_q$, we have
\begin{equation*}
\begin{split}
U_{\alpha, \beta} (\psi,\cA, & \cB;G) \\
\ll & (\#\cA)^{1-1/2\nu} (\#\cB)^{1-1/(\nu+2)}T^{(\nu+1)/\nu(\nu+2)}
q^{1/4(\nu+2)}(\log q)^{1/(\nu+2)}.
\end{split}
\end{equation*}
\end{lemma}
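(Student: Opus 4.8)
The plan is to treat $U_{\alpha,\beta}(\psi,\cA,\cB;G)$ as a bilinear form, eliminate the two weight sequences asymmetrically by Hölder's inequality, and reduce the surviving object to complete character sums over the cyclic group $\langle G\rangle$ that can be controlled by a Weil--Bombieri type estimate. First I would apply Hölder's inequality to the summation over $a\in\cA$ with exponent $2\nu$. Since $|\alpha_a|\le 1$, this produces the factor $(\#\cA)^{1-1/(2\nu)}$ and reduces matters to bounding the $2\nu$-th moment $R=\sum_{a\in\cA}\bigl|\sum_{b\in\cB}\beta_b\,\psi(x(abG))\bigr|^{2\nu}$, which I would majorise by extending the outer summation to all $a\in\Z_T$ by positivity. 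Parametrising the subgroup by $P=aG$, so that $abG=bP$, and expanding the $2\nu$-th power, the moment becomes a sum over tuples $(b_1,\dots,b_{2\nu})\in\cB^{2\nu}$ of the complete sums $\sum_{P\in\langle G\rangle}\psi\bigl(\sum_{i=1}^{\nu}x(b_iP)-\sum_{i=\nu+1}^{2\nu}x(b_iP)\bigr)$.

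The core of the argument is the evaluation of these complete sums. For every tuple for which the rational function $\sum_i\pm x(b_iP)$ is nonconstant on $\langle G\rangle$, the natural multivariable extension of Lemma~\ref{lem:Subgr} --- a Bombieri--Weil estimate in which the degree of the associated function is controlled in terms of $\nu$ --- gives cancellation of order $q^{1/2}$. Because $\cB$ is an arbitrary subset of $\Z_T$ rather than a full residue system, summing the absolute values of these complete sums over all of $\cB^{2\nu}$ directly would be wasteful; instead I would complete the summation over $b$ against the additive characters of $\Z_T$. This standard completion device is what allows one to profit from the cancellation rather than pay the full count $(\#\cB)^{2\nu}$, and it is the source of the logarithmic factor $(\log q)^{1/(\nu+2)}$. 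The generic (off-diagonal) terms then account for the power of $q$, while the precise exponent $\nu+2$ and the powers of $\#\cB$, $T$ and $q$ emerge from balancing the moment order $2\nu$ against this completion and the diagonal contribution below, after which one takes the $2\nu$-th root.

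The step I expect to be the main obstacle is the analysis of the \emph{degenerate} tuples, namely those for which $\sum_i\pm x(b_iP)$ is constant on $\langle G\rangle$ and the corresponding complete sum is as large as $T$. Here one must use both the group law on $\E$ and the symmetry $x(Q)=x(-Q)$, which forces the values $\pm b_i$ to coincide in pairs up to sign, and then count how many such tuples lie in $\cB^{2\nu}$; this count is precisely what produces the factor $T^{(\nu+1)/(\nu(\nu+2))}$. A secondary technical point is to confirm that the implied constant and the degree in the Weil--Bombieri bound remain uniform in $\nu$ and independent of the $b_i$ and of $\psi$, so that the off-diagonal sums are genuinely of size $q^{1/2}$. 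Once the diagonal and off-diagonal contributions to $R$ are separated and bounded, collecting terms and carrying out the elementary optimisation of the exponents yields the stated estimate.
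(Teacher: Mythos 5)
First, note that the paper does not actually prove this lemma: it is quoted verbatim from Theorem~2.1 of the Ahmadi--Shparlinski paper cited as \cite{AhmShp}, so the comparison below is with the argument of that source.

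Your proposal contains a genuine gap, and it is located exactly where you label the issue ``secondary'': the degree of the function $P\mapsto\sum_{i}\pm x(b_iP)$ on $\E$ is \emph{not} controlled in terms of $\nu$. Multiplication by $b$ is an isogeny of degree $b^2$, so $x(bP)$, viewed as a rational function of $P$, has degree of order $b^2$, and the Bombieri--Weil bound for the complete sum $\sum_{P\in\langle G\rangle}\psi\bigl(\sum_i\pm x(b_iP)\bigr)$ carries a factor of the degree, i.e.\ of order $\sum_i b_i^2$, which can be as large as $\nu T^2$. (This is precisely why the general bound~\eqref{eq:Gen Bound} has the factor $n^2$.) So the ``off-diagonal'' complete sums are not $O(q^{1/2})$ uniformly in the $b_i$, and your H\"older-plus-Weil-plus-diagonal scheme collapses at its central step. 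The actual proof must first reduce to tuples with \emph{small} multipliers: one exploits the fact that $x(abG)$ depends only on $ab\bmod T$, so the inner sum over $a\in\Z_T^*$ is invariant under the dilation $(a,\vec b)\mapsto(av^{-1},v\vec b)$, and one averages over $v\in\Z_T^*$, keeping only those $v$ for which all residues $vb_i\bmod T$ lie in a short interval of length $h$. Detecting that interval condition by completion with additive characters of $\Z_T$ is the legitimate source of the $\log q$ (your proposed ``completion over $b$'' cannot work, since $\cB$ is an arbitrary subset, not an interval), and optimising $h$ against the Weil loss $h^2q^{1/2}$ is what produces the characteristic denominators $\nu+2$ and $\nu(\nu+2)$ in the exponents.

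A second, confirming symptom that the route is wrong: even granting a uniform $O(q^{1/2})$ for the nondegenerate complete sums, your scheme yields
$$
U_{\alpha,\beta}\ll(\#\cA)^{1-1/2\nu}\left((\#\cB)^{1/2}T^{1/2\nu}+\#\cB\, q^{1/4\nu}\right),
$$
which is the two-term bound of the shape~\eqref{eq:Bound V} (essentially the older Banks--Friedlander--Garaev--Shparlinski estimate), not the single-term bound of the lemma. The exponents $(\#\cB)^{1-1/(\nu+2)}$, $T^{(\nu+1)/\nu(\nu+2)}$, $q^{1/4(\nu+2)}$ cannot arise from ``balancing'' a diagonal term $(\#\cB)^{\nu}T$ against an off-diagonal term $(\#\cB)^{2\nu}q^{1/2}$ inside a single $2\nu$-th moment; they arise from the extra H\"older step in the dilation variable $v$. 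Likewise, your claim that the degenerate-tuple count produces $T^{(\nu+1)/\nu(\nu+2)}$ is not right: the pairing-up count gives a contribution $(\#\cB)^{\nu}T$ to the moment, hence $(\#\cB)^{1/2}T^{1/2\nu}$ after the root, a different exponent. The opening H\"older step in $a$ (yielding $(\#\cA)^{1-1/2\nu}$) and the use of $x(Q)=x(-Q)$ in classifying degenerate tuples are correct and do appear in the real argument, but the core mechanism of the proof is missing.
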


\subsection{Bilinear sums of additive type}

For the sum~\eqref{eq:Sum V}
it is shown in~\cite{Shp1} that if 
$$
\max_{P \in \cP} |\rho(P)|   \le 1
\mand \max_{Q \in \cQ} 
|\vartheta(Q)|\le 1
$$
then for any fixed integer $\nu\ge 1$  we have
\begin{equation}
\label{eq:Bound V}
V_{\rho, \vartheta} (\psi,\cP, \cQ)  \ll (\# \cP)^{1-1/2\nu} 
(\# \cQ)^{1/2} q^{1/2\nu}  +(\#
\cP)^{1-1/2\nu} \# \cQ q^{1/4\nu}.
\end{equation} 
Here we obtain a different bound which is stronger
than~\eqref{eq:Bound V} in several cases (for example,
when $\#\cP = \# \cQ$). 

\begin{theorem}
\label{thm:BillSum-A}
Let $\E$ be an ordinary elliptic
curve defined over $\F_q$ and let
$$
\sum_{P \in \cP} |\rho(P)|^2  \le R
\mand \sum_{Q \in \cQ} 
|\vartheta(Q)|^2  \le T.
$$
 Then,  uniformly over all 
nontrivial additive characters $\psi$of $\F_q$, 
$$
|V_{\rho, \vartheta} (\psi,\cP, \cQ)| \ll \sqrt{q RT}. 
$$
\end{theorem}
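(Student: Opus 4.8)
The plan is to bound $V_{\rho,\vartheta}(\psi,\cP,\cQ)$ by combining the Cauchy--Schwarz inequality with Fourier analysis on the finite abelian group $\E(\Fq)$, reducing everything to the uniform single-sum estimate underlying Lemma~\ref{lem:Subgr}. First I would apply Cauchy--Schwarz in the variable $P$, dropping the constraint $P\in\cP$ by extending the inner square to all of $\E(\Fq)$ (the added terms are nonnegative). Setting $g(P)=\sum_{Q\in\cQ}\vartheta(Q)\psi(x(P\oplus Q))$, this gives
\[
|V_{\rho,\vartheta}(\psi,\cP,\cQ)|^2\le\Big(\sum_{P\in\cP}|\rho(P)|^2\Big)\sum_{P\in\E(\Fq)}|g(P)|^2\le R\,\Sigma,\qquad\Sigma:=\sum_{P\in\E(\Fq)}|g(P)|^2,
\]
so that it suffices to prove $\Sigma\ll qT$.

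The key observation is that $g$ is a correlation on the group $\E(\Fq)$. Extending $\vartheta$ by zero to all of $\E(\Fq)$ and setting $f(R)=\psi(x(R))$ (the single undefined value at $R=\cO$ being discarded at a cost of $O(1)$), one has $g(P)=\sum_{R\in\E(\Fq)}\vartheta(R\ominus P)f(R)$. Passing to the characters $\chi$ of $\E(\Fq)$, the convolution theorem factors the Fourier coefficients as $|\widehat g(\chi)|=|\widehat f(\chi)|\,\big|\sum_{Q}\vartheta(Q)\chi(Q)\big|$, where $\widehat f(\chi)=\sum_{R\in\E(\Fq)}\psi(x(R))\overline{\chi(R)}$, and Plancherel's identity gives
\[
\Sigma=\frac{1}{\#\E(\Fq)}\sum_{\chi}|\widehat f(\chi)|^2\,\Big|\sum_{Q}\vartheta(Q)\chi(Q)\Big|^2.
\]
Now the crucial input is the bound $|\widehat f(\chi)|\ll q^{1/2}$, uniform over \emph{all} characters $\chi$ (including the principal one): this is exactly the Bombieri-type estimate recalled in the introduction, the full-group analogue of Lemma~\ref{lem:Subgr}, since $\widehat f(\chi)$ is an exponential sum over all points of $\E(\Fq)$ of a function from the function field of $\E$ twisted by a group character. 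Pulling out $\max_\chi|\widehat f(\chi)|^2\ll q$ and using orthogonality in the form $\frac{1}{\#\E(\Fq)}\sum_\chi\big|\sum_Q\vartheta(Q)\chi(Q)\big|^2=\sum_Q|\vartheta(Q)|^2\le T$, I obtain $\Sigma\ll qT$, which yields the theorem.

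I expect the conceptual heart to be the decision to pass to the Fourier side rather than to expand $|g(P)|^2$ directly. The direct route would replace $\Sigma$ by $\sum_{Q_1,Q_2}\vartheta(Q_1)\overline{\vartheta(Q_2)}\sum_{P\in\E(\Fq)}\psi(x(P\oplus Q_1)-x(P\oplus Q_2))$ and estimate each inner sum by $O(q^{1/2})$ through Bombieri; the diagonal $Q_1=Q_2$ then contributes the desired $O(qT)$, but the off-diagonal loses a spurious factor of order $\#\cQ\,q^{1/2}T$ --- precisely the defect present in the earlier bound~\eqref{eq:Bound V}. The Fourier argument removes this loss because the cancellation stays diagonal in the frequency $\chi$, every frequency being controlled by the same estimate $|\widehat f(\chi)|\ll q^{1/2}$. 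Thus the only genuine analytic ingredient is the uniform single-sum bound over the full group; the remaining work is the bookkeeping needed to justify the convolution identity and to absorb the contribution of the point $\cO$.
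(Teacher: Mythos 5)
Your proposal is correct and is essentially the paper's own argument rearranged: both proofs detect $P\oplus Q$ via the characters of $\E(\F_q)$, invoke the uniform bound $\sum_{S}\psi(x(S))\overline{\chi(S)}\ll q^{1/2}$ (Lemma~\ref{lem:Subgr}), and finish with Cauchy--Schwarz plus orthogonality. The only cosmetic difference is that you apply Cauchy--Schwarz in the variable $P$ before passing to the Fourier side, while the paper expands over characters first and then applies Cauchy--Schwarz over $\chi$ to the two remaining factors; the inputs and the resulting bound $\sqrt{qRT}$ are identical.
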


\begin{proof} Let $\cX$ be the set of group characters on $\E(\F_q)$. 
We collect the points $P$ and $Q$ with a given sum $S = P \oplus Q$
and identify this condition via the character sum over $\cX$. 
This gives 
$$
V_{\rho, \vartheta} (\psi,\cP, \cQ)  = \sum_{S \in \E(\F_q)}
\psi(x(S)) \sum_{P \in \cP} \sum_{Q \in \cQ} 
\rho(P)\vartheta(Q) \frac{1}{\#\E(\F_q)} \sum_{\chi\in \cX} \chi(P \oplus Q \ominus S).
$$ 
Therefore
\begin{equation*}
\begin{split}
V_{\rho, \vartheta} (\psi,\cP, \cQ)  = 
\frac{1}{\#\E(\F_q)} \sum_{\chi\in \cX} &  \sum_{S \in \E(\F_q)}
\psi(x(S)) \overline{\chi(S)}\\
&\sum_{P \in \cP} \rho(P) \chi(P) \sum_{Q \in \cQ} 
\vartheta(Q) \chi(Q).
\end{split}
\end{equation*} 
The sums over $S$ is $O(q^{1/2})$ by Lemma~\ref{lem:Subgr}, so 
 $$
V_{\rho, \vartheta} (\psi,\cP, \cQ)  \ll  
\frac{q^{1/2}}{\#\E(\F_q)} \sum_{\chi\in \cX}   
\left|\sum_{P \in \cP} \rho(P) \chi(P)\right|
\left| \sum_{Q \in \cQ} \vartheta(Q) \chi(Q)\right|.
$$ 
We now use the Cauchy inequality, getting 
\begin{eqnarray*}
\lefteqn{\(\sum_{\chi\in \cX}   
\left|\sum_{P \in \cP} \rho(P) \chi(P)\right|
\left| \sum_{Q \in \cQ} \vartheta(Q) \chi(Q)\right|\)^2}\\
& & \qquad \le \sum_{\chi\in \cX}   
\left|\sum_{P \in \cP} \rho(P) \chi(P)\right|^2 \sum_{\chi\in \cX}  
\left| \sum_{Q \in \cQ} \vartheta(Q) \chi(Q)\right|^2\\
& & \qquad \le \#\E(\F_q)^2 RT,
\end{eqnarray*}
since
\begin{eqnarray*}
\sum_{\chi\in \cX}  
\left|\sum_{P \in \cP} \rho(P) \chi(P)\right|^2
& =& \sum_{P_1,P_2 \in \cP}\rho(P_1) \overline{\rho(P_2)}
\sum_{\chi\in \cX}  \chi(P_1 \ominus P_2)\\
&=&  \#\E(\F_q) \sum_{P  \in \cP} |\rho(P)|^2
\le \#\E(\F_q)  R
\end{eqnarray*}
Similarly, 
$$
\sum_{\chi\in \cX}  
\left| \sum_{Q \in \cQ} \vartheta(Q) \chi(Q)\right|^2
\le \#\E(\F_q)  T, 
$$
and the desired result now follows. 
\end{proof}

\section{Combinatorial Problems}

\subsection{Sum-product problem for elliptic curves}

In~\cite{Shp2}, for any sets $\cR, \cS \subseteq \E$
it is shown that
\begin{equation}
\label{EC:Sum-Prod-08}
\# \cU \#\cV \gg 
\min\{ q\# \cR,  (\# \cR)^{2} \#\cS q^{-1/2}\}, 
\end{equation}
where 
\begin{equation}
\label{eq:U and V}
\begin{split}
\cU & = \{x(R) + x(S)~:~R \in \cR,\  S \in \cS\},\\
\cV & = \{x(R\oplus S)~:~R \in \cR,  \  S \in \cS\}.
\end{split} 
\end{equation}
Clearly~\eqref{EC:Sum-Prod-08}  implies that at least one of the sets $\cU$
and $\cV$ is large. 

The main ingredient of the proof of~\eqref{EC:Sum-Prod-08}
in~\cite{Shp2} is~\eqref{eq:Bound V}. Using Theorem~\ref{thm:BillSum-A}
in the argument of~\cite{Shp2} one 
immediately derives the following improvement on~\eqref{EC:Sum-Prod-08}: 

\begin{theorem}
Let $\E$ be an ordinary elliptic curve defined over $\Fq$ and let
$\cR$ and  $\cS$ be arbitrary subsets of $\E(\F_{q})$.  
Then for the sets $\cU$ and $\cV$, given by~\eqref{eq:U and V},  we have
$$
\# \cU \#\cV \gg 
\min\{ q\# \cR, (\# \cR\#\cS)^{2}q^{-1}\}.
$$
\end{theorem}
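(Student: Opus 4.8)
The plan is to take the proof of~\eqref{EC:Sum-Prod-08} given in~\cite{Shp2} essentially verbatim and to substitute Theorem~\ref{thm:BillSum-A} for the weaker estimate~\eqref{eq:Bound V} at the single point where the bilinear bound is invoked. The key structural feature of the argument in~\cite{Shp2} is that it does not estimate $\#\cU$ and $\#\cV$ separately; instead it counts the pairs $(R,S)\in\cR\times\cS$ through the simultaneous behaviour of $x(R)+x(S)\in\cU$ and $x(R\oplus S)\in\cV$, and reduces the entire estimate to a bound on a bilinear sum of additive type of the shape~\eqref{eq:Sum V}.

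First I would set up the relevant counting quantity and detect membership in $\cU$ and in $\cV$ by additive characters of $\Fq$. Isolating the principal frequency produces a main term built from $\#\cU\,\#\cV$, against which the combinatorial count $\#\cR\,\#\cS$ is compared, while each nonprincipal frequency contributes a sum $V_{\rho,\vartheta}(\psi_\mu,\cP,\cQ)$ with $\cP,\cQ$ assembled from $\cR,\cS$ (or their lifts to $\E(\Fq)$) and with $|\rho|,|\vartheta|\le 1$, so that $\sum_P|\rho(P)|^2\le\#\cP$ and $\sum_Q|\vartheta(Q)|^2\le\#\cQ$. This is the one and only place where genuine cancellation is needed. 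Here~\cite{Shp2} applied~\eqref{eq:Bound V}; inserting instead the sharp bound $|V_{\rho,\vartheta}(\psi_\mu,\cP,\cQ)|\ll(q\,\#\cP\,\#\cQ)^{1/2}$ from Theorem~\ref{thm:BillSum-A} (legitimate since $\psi_\mu$ is nontrivial for $\mu\ne 0$) and summing against the Fourier coefficients of the target sets yields the improved error term.

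Balancing the main term against this error — precisely the optimization already carried out in~\cite{Shp2} — then produces the two regimes $q\#\cR$ and $(\#\cR\#\cS)^2q^{-1}$. The gain over~\eqref{EC:Sum-Prod-08} is quantitatively transparent: for $\nu=1$ the bound~\eqref{eq:Bound V} reads $(\#\cP\#\cQ)^{1/2}q^{1/2}+(\#\cP)^{1/2}\#\cQ\,q^{1/4}$, whose second summand dominates exactly when $\#\cQ>q^{1/2}$, whereas Theorem~\ref{thm:BillSum-A} always delivers the clean factor $(q\#\cP\#\cQ)^{1/2}$; it is this removal of the parasitic second summand that sharpens the term $(\#\cR)^2\#\cS\,q^{-1/2}$ of~\eqref{EC:Sum-Prod-08} into $(\#\cR\#\cS)^2q^{-1}$. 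The step I expect to require the most care is re-running the optimization: one must check that with the stronger input the bilinear error is the binding constraint across the whole intermediate range, that the saturation bound $q\#\cR$ still arises from the main term, and that the unavoidable degenerate contributions — the neutral element $\cO$ and the two-to-one nature of $P\mapsto x(P)$ — are absorbed into the implied constants and leave the exponents undisturbed.
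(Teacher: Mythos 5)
Your proposal is correct and is exactly what the paper does: the paper gives no separate proof of this theorem, stating only that one substitutes Theorem~\ref{thm:BillSum-A} for the bound~\eqref{eq:Bound V} in the argument of~\cite{Shp2}, which is precisely your plan. Your accounting of where the gain comes from (removal of the second summand of~\eqref{eq:Bound V}, turning $(\#\cR)^2\#\cS q^{-1/2}$ into $(\#\cR\#\cS)^2q^{-1}$) and of the degenerate contributions is consistent with that.
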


\subsection{S{\'a}rk{\"o}zy problem for elliptic curves}

In~\cite{Shp1}, the number of solutions
$M(\cS, \cT, \cU, \cV)$ of the equation
$$
x(S)+x(T) = x(U \oplus V), \qquad S \in \cS,\ T\in  \cT, \ U \in \cU, 
\ V \in \cV,
$$
for any sets $\cS, \cT, \cU, \cV \subseteq \E(\F_q)$ is estimated.  
It is shown that if
$$
\#\cS \#\cT \#\cU \#\cV \ge q^{7/2+\varepsilon}, \qquad \varepsilon > 0, 
$$
then
\begin{equation}
\label{eq:Sarkozy}
  M(\cS, \cT, \cU, \cV) = \(1 + O(q^{-\varepsilon/2})\) \frac{\#\cS 
\#\cT \#\cU \#\cV}{q} .
\end{equation}

The result above is the elliptic curve analogue of a result
of A.~S{\'a}rk{\"o}zy~\cite{Sark} regarding  the number of solutions
$N(\cA, \cB, \cC, \cD)$ of  the equation
$$
a + b = cd, \qquad a \in \cA, \ b\in  \cB,\ c \in \cC,\ d \in \cD,
$$
for sets
$\cA, \cB, \cC, \cD \subseteq \F_q$. 

In~\cite{Shp1}, the asymptotic formula~\eqref{eq:Sarkozy} 
is proved using~\eqref{eq:Bound V}. 
Now, using Theorem~\ref{thm:BillSum-A}, the following 
improvement on~\eqref{eq:Sarkozy} is immediate. The proof is omitted as it
is completely similar to the proof given in~\cite{Shp1}.

\begin{theorem} Let $\E$ be an ordinary elliptic curve defined over $\Fq$.
Then for every $\varepsilon >0$ and arbitrary sets $\cS, \cT, \cU, \cV \subseteq \E(\F_q)$ with
$$
\#\cS \#\cT \#\cU \#\cV \ge q^{3+\varepsilon}, \qquad \varepsilon > 0
$$
we have
$$
M(\cS, \cT, \cU, \cV) = \(1 + O(q^{-\varepsilon/2})\) \frac{\#\cS 
\#\cT \#\cU \#\cV}{q}\cdot
$$ 
\end{theorem}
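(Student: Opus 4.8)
The plan is to detect the equation $x(S)+x(T)=x(U\oplus V)$ by additive characters and to isolate the one bilinear additive-type factor that is controlled by Theorem~\ref{thm:BillSum-A}. Writing every additive character of $\F_q$ as $z\mapsto\psi(cz)$ for $c\in\F_q$, orthogonality gives
$$
M(\cS,\cT,\cU,\cV)=\frac{1}{q}\sum_{c\in\F_q}\ \sum_{S\in\cS}\sum_{T\in\cT}\sum_{U\in\cU}\sum_{V\in\cV}\psi\(c\(x(S)+x(T)-x(U\oplus V)\)\).
$$
The term $c=0$ contributes exactly the main term $\#\cS\,\#\cT\,\#\cU\,\#\cV/q$, so it remains to show that the contribution $E$ of the terms with $c\neq0$ is $O\(q^{-\varepsilon/2}\)$ times this main term.

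For each fixed $c$ the inner quadruple sum factors as
$$
\(\sum_{S\in\cS}\psi(c\,x(S))\)\(\sum_{T\in\cT}\psi(c\,x(T))\)\(\sum_{U\in\cU}\sum_{V\in\cV}\psi(-c\,x(U\oplus V))\).
$$
I would denote the first two factors by $A(c)$ and $B(c)$. The third factor is precisely the sum $V_{\rho,\vartheta}$ of~\eqref{eq:Sum V} with $\cP=\cU$, $\cQ=\cV$, $\rho=\vartheta=1$, taken against the additive character $z\mapsto\psi(-cz)$, which is nontrivial since $c\neq0$. As the bound of Theorem~\ref{thm:BillSum-A} is uniform over nontrivial characters and its hypotheses hold here with the two bounds equal to $\#\cU$ and $\#\cV$, this factor is $\ll\sqrt{q\,\#\cU\,\#\cV}$. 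Pulling out this uniform bound leaves
$$
|E|\ll\frac{\sqrt{q\,\#\cU\,\#\cV}}{q}\sum_{c\in\F_q}|A(c)|\,|B(c)|.
$$

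To control the remaining sum over $c$ I would apply the Cauchy inequality together with orthogonality once more. Since
$$
\sum_{c\in\F_q}|A(c)|^2=q\,\#\{(S_1,S_2)\in\cS\times\cS:\ x(S_1)=x(S_2)\}\le 2q\,\#\cS,
$$
where the factor $2$ reflects that each $x$-coordinate is shared by at most the two points $S$ and $\ominus S$, and likewise $\sum_{c}|B(c)|^2\le 2q\,\#\cT$, the Cauchy inequality yields $\sum_{c}|A(c)|\,|B(c)|\ll q\sqrt{\#\cS\,\#\cT}$. Combining the two bounds gives $|E|\ll\sqrt{q\,\#\cS\,\#\cT\,\#\cU\,\#\cV}$, so the error relative to the main term is $\ll q^{3/2}\(\#\cS\,\#\cT\,\#\cU\,\#\cV\)^{-1/2}$, which is $\ll q^{-\varepsilon/2}$ exactly under the hypothesis $\#\cS\,\#\cT\,\#\cU\,\#\cV\ge q^{3+\varepsilon}$, proving the claim.

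The only routine technical points left to verify are the treatment of the neutral element $\cO$, at which $x$ is undefined: its possible presence in any of the sets alters each sum by $O(1)$ and is absorbed into the error term, and the few pairs with $U\oplus V=\cO$ contribute negligibly. The step carrying the actual content — and the sole place where anything could go wrong — is the uniform application of Theorem~\ref{thm:BillSum-A} to the bilinear factor, since it is precisely the replacement of the weaker estimate~\eqref{eq:Bound V} by the clean bound $\sqrt{q\,\#\cU\,\#\cV}$ that lowers the admissible threshold from $q^{7/2+\varepsilon}$ to $q^{3+\varepsilon}$. Once that factor is bounded, the remaining orthogonality and Cauchy--Schwarz manipulations present no real obstacle.
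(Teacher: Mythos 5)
Your proposal is correct and follows essentially the route the paper intends: the paper omits the proof precisely because it is the argument of~\cite{Shp1} (detect the equation by additive characters, isolate the bilinear sum over $\cU\times\cV$, and control the single sums over $\cS$ and $\cT$ by Cauchy's inequality and orthogonality, using that each $x$-coordinate comes from at most two points) with Theorem~\ref{thm:BillSum-A} substituted for the older bound~\eqref{eq:Bound V}. Your reconstruction of that argument, including the threshold computation $q^{3/2}(\#\cS\,\#\cT\,\#\cU\,\#\cV)^{-1/2}\le q^{-\varepsilon/2}$, is accurate.
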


\subsection{Distribution of subset sums}

Let $P \in \E(\F_q)$ be an $\F_q$-rational point on an elliptic curve $\E$ over $\F_q$, and $\sigma$ be an endomorphism on $\E$. 
Also, let  $\cM_k$ be the set of $k$-dimensional vectors with
coordinates $0, \pm 1$ which do not have two consecutive nonzero
components, that is,
\begin{equation}
\label{non-aj}
      \mu_j\mu_{j+1} = 0, \mbox{ for all }j = 0, \ldots, k-2.
\end{equation}
Motivated by applications to pseudo-random number generation,  the
set of points
\begin{equation}
\label{eq:Points_Gen}
P_{\sigma,\vec{m}} = \sum_{j=0}^{k-1} \mu_j \sigma^j(P), \qquad
\vec{m} = (\mu_0,
\ldots, \mu_{k-1}) \in \cM_k,
\end{equation}
where $\sigma$ is an endomorphism of the elliptic curve $\E$ have been  
considered in~\cite{LanShp2}.

In~\cite{LanShp2}, three specific endomorphisms are considered. 
The first endomorphism considered in~\cite{LanShp2} is the {\em doubling} endomorphism $\delta(P) = 2P$
which is defined for any elliptic curve over any
finite field. 

The second endomorphism considered in~\cite{LanShp2}
is the {\em Frobenius endomorphism} of the so called {\em Koblitz curves}.
A Koblitz curve, $\E_a$, $a \in \F_2$, is given by the
Weierstra\ss\ equation
$$
      \E_a: \ Y^2 + XY =X^3 + a X^2 + 1,
$$
(see~\cite{Kob}) and its Frobenius
endomorphism $\varphi$,  which acts on a $\F_{2^n}$-rational point $P
= (x,y) \in \E_a(\F_{2^n})$
is given by
$$
\varphi(P) = (x^2, y^2).
$$
Clearly $\varphi(P) \in \E_a(\F_{2^n})$. 

Finally, as in~\cite{LanShp2}, we consider  one of the so-called GLV curves introduced by
Gallant, Lambert and Vanstone~\cite{GLV}, which we detail below. 

Let the characteristic of $\F_q$ be a prime $p\ge 3$ such that $-7$ is a quadratic residue modulo
$p$ (that is, $p \equiv 1,2,4 \pmod 7$).  Define an elliptic curve 
$\E_{GLV}$ over $\F_p$ by
$$
\E_{GLV}: Y^2=X^3- \frac{3}{4} X^2 -2X -1.
$$
Let $\xi \in \F_p$ be a square root of $-7$.
If $b=(1+\xi)/2$ and $c=(b-3)/4$, then the map $\psi$,
defined in the affine plane by
$$\psi(P)=\(\frac{x^2-b}{b^2(x-c)},\frac{y(x^2-2cx+b)}
{b^3(x-c)^2}\)$$
for $P= (x,y) \in \E_{GLV}$, 
is an endomorphism of $\E_{GLV}$. 

In~\cite{LanShp2}, it has been
shown that under mild conditions,
the points~\eqref{eq:Points_Gen} possess some  uniformity of distribution
properties, where $\sigma$ is one of the following endomorphisms:
\begin{equation}
\label{eq:endom}
\sigma = \left\{ \begin{array}{ll}
 \delta, & \text{for an  arbitrary curve $\E$},\\
 \varphi, & \text{for a Koblitz curve $\E = \E_a$, $a=0,1$}\\
  \psi, & \text{  for the GLV curve $\E = \E_{GLV}$}.
  \end{array} \right.
\end{equation}

Here, using Theorem~\ref{thm:BillSum-A} we improve the result of~\cite{LanShp2}
in some ranges of parameters. 

First we need the following estimate on $\#\cM_k$ given by Bosma~\cite[Proposition~4]{Bosma}.

\begin{lemma}
For any $k\ge 2$, we have:
$$
\#\cM_k=\frac{4}{3}2^k+O(1).
$$
\end{lemma}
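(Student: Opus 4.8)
The plan is to count $\#\cM_k$ by setting up a linear recurrence and solving it in closed form. Write $a_k = \#\cM_k$ and condition on the last coordinate $\mu_{k-1}$ of a vector $(\mu_0,\ldots,\mu_{k-1}) \in \cM_k$. If $\mu_{k-1} = 0$, then the truncation $(\mu_0,\ldots,\mu_{k-2})$ ranges freely over all of $\cM_{k-1}$, contributing $a_{k-1}$ admissible vectors. If instead $\mu_{k-1} = \pm 1$ (two choices), then the no-two-consecutive-nonzero constraint~\eqref{non-aj} forces $\mu_{k-2} = 0$, after which $(\mu_0,\ldots,\mu_{k-3})$ ranges freely over $\cM_{k-2}$; this contributes $2 a_{k-2}$ vectors. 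Since these two cases partition $\cM_k$, I obtain the recurrence
$$
a_k = a_{k-1} + 2 a_{k-2}, \qquad k \ge 2,
$$
with base values $a_0 = 1$ (the empty vector) and $a_1 = 3$ (namely $0, 1, -1$) checked directly against the definition.

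Next I would solve the recurrence through its characteristic equation $x^2 - x - 2 = (x-2)(x+1) = 0$, whose roots are $2$ and $-1$. The general solution is therefore $a_k = C\, 2^k + D\, (-1)^k$, and imposing $a_0 = 1$ and $a_1 = 3$ gives the linear system $C + D = 1$ and $2C - D = 3$, whence $C = 4/3$ and $D = -1/3$. This produces the exact formula
$$
\#\cM_k = \frac{4}{3} 2^k - \frac{1}{3}(-1)^k,
$$
from which the stated estimate $\#\cM_k = \tfrac{4}{3}2^k + O(1)$ follows immediately, with the error term bounded by $1/3$ in absolute value.

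The argument is elementary and presents no substantive obstacle; the only points requiring care are verifying that the case split is exhaustive and disjoint (so that the coefficients $1$ and $2$ in the recurrence are correct), and that the short base cases are handled consistently with the convention for $a_0$. As a sanity check I would also confirm $a_2 = a_1 + 2a_0 = 5$ directly, since the $3^2 = 9$ length-two vectors exclude exactly the four with both entries in $\{\pm 1\}$.
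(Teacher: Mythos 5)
Your proof is correct, and it is in fact more than the paper provides: the paper states this lemma without proof, simply citing Bosma's Proposition~4 (which concerns counting signed-bit, non-adjacent-form strings). Your recurrence $a_k = a_{k-1} + 2a_{k-2}$ from conditioning on the last coordinate is the standard and essentially inevitable argument, the case split is exhaustive and disjoint as you note, and the base values $a_0 = 1$, $a_1 = 3$ are consistent with the definition. Solving via the characteristic roots $2$ and $-1$ gives the exact formula $\#\cM_k = \tfrac{4}{3}2^k - \tfrac{1}{3}(-1)^k$, which is sharper than the stated $\tfrac{4}{3}2^k + O(1)$; your sanity check $a_2 = 5$ against the direct count $9-4$ confirms the coefficients. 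What your self-contained derivation buys over the paper's citation is transparency and an explicit $O(1)$ constant (namely $1/3$); what the citation buys is brevity and a pointer to the broader context of non-adjacent representations. No gaps.
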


For  an endomorphism $\sigma$ of an elliptic curve $\E$ over $\F_q$
and a nonprincipal additive character $\psi$ of $\F_q$, we define the exponential sum
$$
S_{\sigma,k}(\chi) = \sum_{\vec{m} \in\cM_k}
\chi\(x(P_{\sigma,\vec{m}})\),
$$
where we always assume that the value of the character
is defined as zero if the expression in the argument is not defined
(for example,
if  $P_{\sigma,\vec{m}} = \Or$ in the above sum).

It is shown in~\cite[Lemma~2.1]{LanShp2} that if  $P \in \E(\F_q)$ is of prime order $\ell$
then for any  integer $k\ge 1$  the bound
\begin{equation}
\label{eq:S sigma}
| S_{\sigma,k}(\chi) | \ll \# \cM_k
\(q^{1/4\nu} \ell^{-1/2\nu} + 2^{ - k/2\nu}q^{(\nu+1)/4\nu^2}\)
\end{equation}
holds with any fixed integer
$$
\nu \ge \frac{\log q}{2k \log 2},
$$
   where $\sigma$ is one of the endomorphisms~\eqref{eq:endom}.

Given an endomorphism $\sigma$ of an elliptic curve $\E$ over $\F_q$,
and an integer $k \ge 1$,
    we denote by $N_{\sigma,k}(Q)$
the number of representations
$$
      P_{\sigma,\vec{m}}  = Q, \qquad  \vec{m} =  (m_0, \ldots, m_{k-1}) \in
      \cM_k.
$$
We  recall~\cite[Lemma~2.1]{LanShp2}:

\begin{lemma}\label{lem:coll}
Let $\E$ be an ordinary elliptic curve defined over $\Fq$ and let
$P \in \E(\F_q)$ be of prime order $\ell$. Then for any positive integer $k$ and for every point $Q\in \E(\F_{q})$ the
      bound
$$
N_{\sigma,k}(Q) \ll 2^k \ell^{-1} + 1
$$
holds, where $\sigma$ is one of the endomorphisms~\eqref{eq:endom}.
\end{lemma}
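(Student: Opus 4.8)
The plan is to reduce the point equation to a scalar congruence modulo $\ell$, lift that congruence to the endomorphism ring, and then count collisions using uniqueness of non-adjacent forms together with a lattice-point estimate. First I would exploit that $P$ has prime order $\ell$: in each of the three cases $\langle P\rangle$ is the unique subgroup of order $\ell$ in $\E(\F_q)$ (equivalently, $P$ spans an eigenline of $\sigma$ on the $\ell$-torsion), so $\sigma$ maps $\langle P\rangle$ into itself and acts there as multiplication by some integer $\lambda$, namely a root modulo $\ell$ of the characteristic polynomial of $\sigma$ ($\lambda\equiv 2$ for the doubling map $\delta$, and a root of the quadratic satisfied by $\varphi$, resp.\ $\psi$, in the Koblitz and GLV cases). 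Hence $\sigma^j(P)=\lambda^j P$ and
$$
P_{\sigma,\vec{m}}=\Bigl(\sum_{j=0}^{k-1}\mu_j\lambda^j\Bigr)P .
$$
Consequently $N_{\sigma,k}(Q)=0$ unless $Q=c_0P$ for some $c_0\in\Z$, in which case $N_{\sigma,k}(Q)$ is exactly the number of $\vec{m}\in\cM_k$ with $\sum_{j=0}^{k-1}\mu_j\lambda^j\equiv c_0\pmod{\ell}$.

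Next I would lift the linear form to characteristic zero. Let $\tau$ be the image of $\sigma$ in the endomorphism ring, so $\tau=2$ for $\delta$, while $\tau$ is an imaginary quadratic integer with $|\tau|=\sqrt2$ for the Frobenius of a Koblitz curve and for the GLV endomorphism; then $\lambda$ is precisely the reduction of $\tau$ modulo a degree-one prime $\mathfrak{l}\mid\ell$ of $\Z[\tau]$ with $\Z[\tau]/\mathfrak{l}\cong\F_\ell$. The defining condition~\eqref{non-aj} of $\cM_k$ is exactly the non-adjacency condition of the $\tau$-adic NAF, so by uniqueness of the NAF the map $\vec{m}\mapsto z_{\vec m}=\sum_{j=0}^{k-1}\mu_j\tau^j$ is injective on $\cM_k$. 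Thus distinct vectors yield distinct elements $z_{\vec m}\in\Z[\tau]$, all lying in the region $|z|\le\sum_{j}|\tau|^j\ll|\tau|^k$, that is, inside a disc of radius $\ll 2^{k/2}$ (an interval of length $\ll 2^k$ when $\tau=2$).

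Finally I would count collisions. Two vectors give the same $Q$ if and only if $z_{\vec m}\equiv z_{\vec m'}\pmod{\mathfrak{l}}$, so $N_{\sigma,k}(Q)$ is at most the number of distinct $z_{\vec m}$ in a single residue class of $\mathfrak{l}$. Regarding $\mathfrak{l}$ as a sublattice of $\Z[\tau]$ of covolume $\asymp\ell$, I would apply the standard area-over-covolume estimate with a boundary correction. For $\tau=2$ this is transparent: the $z_{\vec m}$ are distinct integers in an interval of length $\ll 2^k$, so any fixed residue class modulo $\ell$ contains $\ll 2^k/\ell+1$ of them. In the complex cases the area term is $\ll 2^k/\ell$, and the main obstacle is controlling the boundary term uniformly in $\ell$; here the decisive fact is that every nonzero $\alpha\in\mathfrak{l}$ satisfies $\ell\mid N(\alpha)=|\alpha|^2$, whence the shortest vector of $\mathfrak{l}$ has length $\ge\sqrt{\ell}$. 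The boundary contribution is then $\ll 2^{k/2}/\sqrt{\ell}=\sqrt{2^k/\ell}\ll 2^k/\ell+1$, and combining the two terms gives $N_{\sigma,k}(Q)\ll 2^k/\ell+1$, as required.
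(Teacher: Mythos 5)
First, a point of reference: the paper does not prove this lemma at all --- it is recalled verbatim as Lemma~2.1 of the cited work of Lange and Shparlinski --- so there is no in-paper argument to compare against, and your proposal has to stand on its own.

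Your overall strategy is the right one, and surely close to the source: realize $P_{\sigma,\vec m}$ as $z_{\vec m}(P)$ with $z_{\vec m}=\sum_j\mu_j\tau^j\in\Z[\tau]$, use uniqueness of the (ordinary or $\tau$-adic) NAF to make $\vec m\mapsto z_{\vec m}$ injective, and count points of a translate of an index-$\ge\ell$ sublattice in a disc of radius $\ll 2^{k/2}$. The quantitative endgame is also correct: area term $\ll 2^k/\ell$, boundary term $\ll 2^{k/2}/\sqrt{\ell}\ll 2^k/\ell+1$ via the norm lower bound on the ideal. The one step that is not justified, and is false in general, is the opening claim that $\langle P\rangle$ is the unique subgroup of order $\ell$ in $\E(\F_q)$, hence $\sigma$-stable with $\sigma$ acting as a scalar $\lambda$. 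If the full $\ell$-torsion is $\F_q$-rational (the hypotheses do not exclude this; it only forces $\ell\mid q-1$ and $\ell\le\sqrt q+1$), there are $\ell+1$ subgroups of order $\ell$, the line $\langle P\rangle$ need not be an eigenline of $\sigma$, and when $\ell$ is inert in $\Z[\tau]$ there is no eigenvalue $\lambda$ and no degree-one prime $\mathfrak{l}$ at all. The repair is to bypass the eigenvalue entirely and work with the annihilator ideal $I=\{z\in\Z[\tau]\,:\,zP=\cO\}$: it contains $\ell$ but not $1$, so $[\Z[\tau]:I]\in\{\ell,\ell^2\}$, and every proper ideal of $\Z[\tau]$ containing $(\ell)$ lies inside some prime above $\ell$, so every nonzero $\alpha\in I$ still satisfies $N(\alpha)=|\alpha|^2\ge\ell$. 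Since $P_{\sigma,\vec m}=Q$ forces $z_{\vec m}$ into a single coset of $I$, your lattice count then applies verbatim (with an even better covolume when $I=(\ell)$). With that replacement --- and an explicit reference for uniqueness of the $\tau$-adic NAF in the Koblitz and GLV cases, which you assert but which is a genuinely nontrivial input rather than a formality --- the argument is complete.
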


We now obtain a bound that improves~\eqref{eq:S sigma} for some values
of parameters (namely for large $k$ and $\ell$).

\begin{theorem}\label{thm:Exp_Sum}
Let $\E$ be an ordinary elliptic curve defined over $\Fq$ and 
let  $P \in \E(\F_q)$ be of prime order $\ell$.
Then for any  integer $k\ge 1$  the bound
$$
| S_{\sigma,k}(\chi) | \ll \# \cM_k q^{1/2}  \ell^{-1} + (\# \cM_k)^{1/2}  q^{1/2}
$$
holds where $\sigma$ is one of the endomorphisms~\eqref{eq:endom}.
\end{theorem}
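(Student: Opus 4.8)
The plan is to deduce the estimate from the additive bilinear bound of Theorem~\ref{thm:BillSum-A}, after splitting each index vector $\vec m\in\cM_k$ into a prefix and a suffix of length about $k/2$. Here $\chi$ is the fixed nonprincipal additive character of $\F_q$, so that Theorem~\ref{thm:BillSum-A} applies with $\psi=\chi$, and we keep the convention that $\chi(x(\Or))=0$. We may assume $k\ge 2$: for $k=1$ we have $\#\cM_1=3$ and the trivial bound $|S_{\sigma,1}(\chi)|\le 3$ is dominated by $(\#\cM_1)^{1/2}q^{1/2}$.

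Set $t=\rf{k/2}$ and write $\vec m=(\mu_0,\dots,\mu_{k-1})$ as a concatenation $\vec m=(\vec a,\vec b)$ with $\vec a=(\mu_0,\dots,\mu_{t-1})$ and $\vec b=(\mu_t,\dots,\mu_{k-1})$. Since $\sigma$ is a group endomorphism,
$$
P_{\sigma,\vec m}=R_{\vec a}\oplus S_{\vec b},\qquad R_{\vec a}=\sum_{j=0}^{t-1}\mu_j\sigma^j(P),\qquad S_{\vec b}=\sum_{j=t}^{k-1}\mu_j\sigma^j(P).
$$
As $\vec m$ ranges over $\cM_k$, the vector $\vec a$ ranges over $\cM_t$ and $\vec b$ over $\cM_{k-t}$ (the no-two-consecutive condition inside each block), and the only remaining constraint is the boundary condition $\mu_{t-1}\mu_t=0$. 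I would remove this coupling by inclusion--exclusion. Put $\cP=\{R_{\vec a}:\vec a\in\cM_t\}$, $\cQ=\{S_{\vec b}:\vec b\in\cM_{k-t}\}$, and let $\rho(R)=\#\{\vec a\in\cM_t:R_{\vec a}=R\}$ and $\vartheta(S)=\#\{\vec b\in\cM_{k-t}:S_{\vec b}=S\}$ be the representation counts; let $\rho',\vartheta'$ be the analogous counts restricted to prefixes with $\mu_{t-1}\ne0$ and suffixes with $\mu_t\ne0$. Then $S_{\sigma,k}(\chi)=V_{\rho,\vartheta}(\chi,\cP,\cQ)-V_{\rho',\vartheta'}(\chi,\cP,\cQ)$, both sums being of the additive bilinear shape~\eqref{eq:Sum V}.

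Next I would estimate the $L^2$-weights needed in Theorem~\ref{thm:BillSum-A}. By definition $\sum_{R\in\cP}\rho(R)=\#\cM_t\ll 2^{t}$, while identifying $R_{\vec a}=P_{\sigma,\vec a}$ and applying Lemma~\ref{lem:coll} (base point $P$, length $t$) gives $\rho(R)\ll 2^{t}\ell^{-1}+1$ uniformly; hence
$$
\sum_{R\in\cP}\rho(R)^2\le\Bigl(\max_{R}\rho(R)\Bigr)\sum_{R\in\cP}\rho(R)\ll\bigl(2^{t}\ell^{-1}+1\bigr)2^{t}.
$$
Writing $S_{\vec b}=\sum_{i=0}^{k-t-1}\mu_{t+i}\sigma^i(P')$ with $P'=\sigma^t(P)$, and noting that $P'$ again has order $\ell$ for each endomorphism in~\eqref{eq:endom}, the same reasoning with base point $P'$ and length $k-t$ gives $\sum_{S\in\cQ}\vartheta(S)^2\ll(2^{k-t}\ell^{-1}+1)2^{k-t}$. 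Since $\rho'\le\rho$ and $\vartheta'\le\vartheta$ pointwise, the same bounds control the second sum. As $t=\rf{k/2}$ forces $2^{t},2^{k-t}\ll 2^{k/2}$, Theorem~\ref{thm:BillSum-A} applied to each sum yields
$$
|S_{\sigma,k}(\chi)|\le|V_{\rho,\vartheta}(\chi,\cP,\cQ)|+|V_{\rho',\vartheta'}(\chi,\cP,\cQ)|\ll\sqrt{q\,\bigl(2^{k/2}\ell^{-1}+1\bigr)^2 2^{k}}=q^{1/2}\bigl(2^{k}\ell^{-1}+2^{k/2}\bigr),
$$
and the claim follows from $\#\cM_k\asymp 2^{k}$.

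The main obstacle is the boundary constraint $\mu_{t-1}\mu_t=0$, which keeps the sum from factoring cleanly into independent prefix and suffix parts; the inclusion--exclusion above is the device I expect to need, and one must verify that the subtracted ``bad'' sum is itself an additive bilinear sum with weights dominated pointwise by $\rho$ and $\vartheta$. A secondary technical point, used for the suffix weight, is that $\sigma^t(P)$ still has order $\ell$ for each of the endomorphisms in~\eqref{eq:endom} (for the doubling map this needs $\ell$ odd, which is harmless). Everything else is routine: the matching exponents appear automatically once the $L^2$-weights are estimated through the collision bound of Lemma~\ref{lem:coll}, which is precisely the input that the $L^2$-normalised Theorem~\ref{thm:BillSum-A} can exploit, in contrast to the cruder normalisation behind~\eqref{eq:S sigma}.
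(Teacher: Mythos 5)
Your proposal is correct and follows essentially the same route as the paper: split each vector at position $\lceil k/2\rceil$, bound the resulting representation multiplicities in $L^2$ via Lemma~\ref{lem:coll}, and apply Theorem~\ref{thm:BillSum-A} to the two resulting bilinear sums. The only cosmetic difference is that you handle the boundary constraint $\mu_{t-1}\mu_t=0$ by inclusion--exclusion, whereas the paper partitions the prefixes according to whether their last digit vanishes; both devices lead to the same two-term bound.
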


\begin{proof} Let us choose $r = \rf{k/2}$.
  For $j=0, 1$ we define $\cU_j$ to be the subset of $\vec{u} = (u_1,
    \ldots, u_r) \in \cM_r$ with $u_r = \pm j$.  To form a vector in
    $\cM_k$, a vector from $\cU_0$ can be appended by any vector from 
    $\cV_0 = \cM_{k-r}$, while a vector from $\cU_1$ requires the following
    digit to be zero. Hence, we put
    $$
    \cV_1 = \left\{ (0, \vec{w})~:~\vec{w}\in \cM_{k-r-1}\right\}.
    $$
    We have
    $$
    S_{\sigma,k}(\chi) = R_{\sigma,0} + R_{\sigma,1},
    $$
    where
    $$
    R_{\sigma,j} = \sum_{\vec{u} \in \cU_j} \sum_{\vec{v} \in \cV_j}
    \chi\(x\(P_{\sigma,\vec{u}} + \sigma^r(P_{\sigma,\vec{v}})\)\),
    \qquad j =0,1.
    $$
We now consider the sets 
$$
\cX_j = \{P_{\sigma,\vec{u}}~:~\vec{u} \in \cU_j\}
\mand 
\cY_j = \{\sigma^r(P_{\sigma,\vec{v}})~:~\vec{v} \in \cV_j\}.
$$
Using Lemma~\ref{lem:coll}, we see that we can write
$$
R_{\sigma,j}= \sum_{X \in \cX_j} \sum_{Y \in \cY_j}M(X) N(Y)
    \chi\(x\(S + T\)\),
    \qquad j =0,1, 
    $$
with some positive coefficients $M(X)$ and  $N(Y)$ such that
$$
M(X) \ll 2^r \ell^{-1} + 1 \mand N(Y) \ll 2^{k-r} \ell^{-1} + 1.
$$
We also trivially have 
$$
\sum_{X \in \cX_j} M(X) = \# \cU_j \mand
 \sum_{Y \in \cY_j} N(Y) = \#\cV_j.
$$
Therefore 
$$
\sum_{X \in \cX_j} M(X)^2 \le  \# \cU_j \(2^r \ell^{-1} + 1\)\quad \text{and}\quad
 \sum_{Y \in \cY_j} N(Y)^2 = \#\cV_j  \(2^{k-r} \ell^{-1} + 1\).
$$
Therefore, by  Theorem~\ref{thm:BillSum-A}, we derive
$$
R_{\sigma,0}\ll
\sqrt{q  (2^r \ell^{-1} + 1)(2^{k-r} \ell^{-1} + 1)  \#\cU_j \#\cV_j},  \qquad j =0,1.
$$
Clearly  $\#\cU_j \#\cV_j\le \# \cM_k\ll 2^k$. Furthermore, 
by the choice of $r$
$$
(2^r \ell^{-1} + 1)(2^{k-r} \ell^{-1} + 1) \ll 
(2^{k/2} \ell^{-1} + 1)^ 2 \ll 2^{k} \ell^{-2} + 1.
$$ 
 And  thus
 $$
 |R_{\sigma,j} |\ll q^{1/2}  2^{k} \ell^{-1} + q^{1/2}  2^{k/2}, \qquad j =0,1, 
 $$  
which concludes the proof. 
\end{proof}

Clearly, if for some fixed $\varepsilon > 0$ we have $\ell > q^{1/2+
    \varepsilon}$ and $2^k \ge q^{1+ \varepsilon}$, then
the bound of Theorem~\ref{thm:Exp_Sum} is nontrivial.
As in~\cite[Section~4]{LanShp2}, we can now use this bound 
in various questions about the distribution of $x(P_{\sigma,\vec{m}})$ for $\vec{m} \in\cM_k$.

\section{Sums Over Consecutive Intervals}

\subsection{Stationary phase sums}

For an integer $n$ and $a,b \in \F_q$,  we now consider the sums
$$
S_{n}(\psi; a,b) = \sum_{P \in \E(\F_q)} 
 \psi\(ax(P) +b x(nP)\). 
$$
As it has been mentioned in~\cite{Shp3}, it follows from a much more general result 
of~\cite[Corollary~5]{LanShp1} 
that if at least one of $a$ and $b$ is a non-zero element
of $\F_q$ and $n > 0$, then
\begin{equation}
\label{eq:Gen Bound}
S_{n}(\psi; a,b)  = O(n^2 q^{1/2}).
\end{equation}
Furthermore, in~\cite{Shp3}, the following two bounds are given: 
\begin{equation}\label{eq:Large d}
S_{n}(\psi;a,b) \ll q^{3/2}/d,
\end{equation}
and
\begin{equation}\label{eq:Small d}
 S_{n}(\psi;a,b) \ll qd^{-1/2} + q^{3/4},
\end{equation}
where $d=\gcd\left(n, \#\E(\F_q)\right)$. The above bounds improve on
~\eqref{eq:Gen Bound} when $d$ is not very small. The   bound~\eqref{eq:Large d} is nontrivial 
whenever $d/q^{1/2}\rightarrow\infty$ as $q\rightarrow \infty$. The  bound~\eqref{eq:Small d}
is nontrivial for $d\rightarrow \infty$ as $q\rightarrow \infty$, however it 
is weaker than the first bound for $d>q^{3/4}$.

In~\cite{Shp3}, the bound~\eqref{eq:Bound V} is used to 
obtain~\eqref{eq:Small d}. Here
we use  Theorem~\ref{thm:BillSum-A}, to improve on the 
bounds~\eqref{eq:Large d} and~\eqref{eq:Small d}. Although the proof of the new bound 
is quite similar to the 
proof given in~\cite{Shp3}, here, for the sake of completeness, 
instead of referring for details to ~\cite{Shp3} 
we give a complete proof of this bound.

\begin{theorem}
\label{thm:Sum 2}
Let $\E$ be an ordinary elliptic curve defined over $\Fq$ and let
$n> 0$ be an arbitrary integer.
Then for any $a\in \F_q^*$ and $b \in \F_q$, 
we have
$$
 S_{n}(\psi;a,b) \ll qd^{-1/2}, 
$$
where   $d = \gcd\left(n, \#\E(\F_q)\right)$.
\end{theorem}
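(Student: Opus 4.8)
The plan is to bound $S_n(\psi;a,b)$ by reducing it to the bilinear sum $V_{\rho,\vartheta}$ and then applying Theorem~\ref{thm:BillSum-A}. The key structural fact is that if $d = \gcd(n,\#\E(\F_q))$, then the image of the multiplication-by-$n$ map, or rather the behaviour of $nP$ as $P$ ranges over $\E(\F_q)$, is governed by a subgroup related to $d$. More precisely, the map $P \mapsto nP$ has a kernel of size $d$ (the $d$-torsion contained in $\E(\F_q)$ after accounting for the gcd), so $nP$ takes each value in its image exactly $d$ times. First I would use this to rewrite the inner structure: fix a point $R$ in the image subgroup and write $P = P_0 \oplus H$ where $H$ ranges over the kernel $\cH$ of order $d$ and $P_0$ runs over a transversal; then $x(nP) = x(nP_0)$ depends only on the coset, while $x(P)$ varies over the coset.

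The heart of the matter is to split the sum so that the factor $\psi(ax(P))$ and the factor $\psi(bx(nP))$ land on opposite sides of a bilinear form. Concretely, I would write $P = Q \oplus H$ with $H \in \cH$ (the kernel of multiplication by $n$, of size $d$) and $Q$ ranging over coset representatives, giving
\begin{equation*}
S_n(\psi;a,b) = \sum_{Q} \psi(b x(nQ)) \sum_{H \in \cH} \psi(a x(Q \oplus H)).
\end{equation*}
The inner sum over the kernel $\cH$ is precisely of additive bilinear type if I now regard one variable as ranging over $\cH$ and absorb the $\psi(bx(nQ))$ factor and the sum over $Q$ appropriately. The natural choice is to set $\cP$ to be the set of coset representatives $Q$ with weight $\rho(Q) = \psi(bx(nQ))$ (so $|\rho(Q)| = 1$ and hence $\sum|\rho|^2 = \#\cP$), and $\cQ = \cH$ with $\vartheta(H) = 1$, so that $V_{\rho,\vartheta}(\psi,\cP,\cH) = \sum_Q \sum_{H}\rho(Q)\psi(x(Q\oplus H))$. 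Matching this to $S_n$ requires $a=1$; in general I would rescale by noting that $\psi(a \cdot)$ is again a nontrivial additive character (since $a \in \F_q^*$), so Theorem~\ref{thm:BillSum-A} applies verbatim to the character $\psi_a(t) = \psi(at)$.

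With $R = \#\cP = \#\E(\F_q)/d \ll q/d$ and $T = \#\cH = d$, Theorem~\ref{thm:BillSum-A} yields
\begin{equation*}
|S_n(\psi;a,b)| \ll \sqrt{q \cdot R \cdot T} = \sqrt{q \cdot (q/d) \cdot d} = \sqrt{q\cdot q} = q,
\end{equation*}
which is only trivial, so the naive split is too lossy. The fix is to send the \emph{large} side to $\cQ$: instead take $\cP = \cH$ (size $d$) with the twist by $\psi(bx(nQ))$ handled separately, and let $\cQ$ carry the coset variable with a saving. The correct accounting should give $R \asymp d$ on one factor while the other factor contributes $T \asymp q/d$ but enters only through the square-root of the \emph{count}, so that the $q$ prefactor combines to produce $qd^{-1/2}$ rather than $q$.

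The hard part will be organizing the decomposition so the $d^{-1/2}$ genuinely survives, i.e. ensuring that one of the two $L^2$-mass bounds $R,T$ comes out as $q/d^2$ times the relevant set size rather than the naive $q/d$. Concretely, I expect that the correct setup exploits that the kernel points collapse $x(nP)$ completely, so the summation over the kernel is a \emph{single} character sum over a subgroup that can be bounded by Lemma~\ref{lem:Subgr} directly (giving $q^{1/2}$), and the remaining coset sum of length $q/d$ is then fed into Theorem~\ref{thm:BillSum-A} with both weights $O(1)$ but with one set being a single coset of size $q/d$, producing $\sqrt{q\cdot (q/d)}$-type terms that I must then sum over the $d$ kernel cosets with the triangle inequality. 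Balancing these — deciding exactly which sum is handled by Lemma~\ref{lem:Subgr} versus Theorem~\ref{thm:BillSum-A} and how the factor of $d$ from the number of cosets interacts with the square-root — is the delicate bookkeeping step, and getting the final exponent to land on $qd^{-1/2}$ (improving on both $q^{3/2}/d$ for $d < q$ and on $qd^{-1/2}+q^{3/4}$ by removing the additive $q^{3/4}$) is where I expect the main effort to lie.
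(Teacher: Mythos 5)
Your structural starting point --- that translating by torsion points annihilates the $x(nP)$ factor --- is the right one, and you correctly diagnose that the naive coset decomposition ($\cP$ a set of coset representatives of size about $q/d$, $\cQ$ the kernel of size about $d$) returns only the trivial bound $\sqrt{q\cdot(q/d)\cdot d}=q$. But neither of the fixes you then propose can close the gap. Swapping which factor is \emph{large} cannot help, because the bound $\sqrt{qRT}$ of Theorem~\ref{thm:BillSum-A} is symmetric in $R$ and $T$; and handing the inner subgroup sum to Lemma~\ref{lem:Subgr} and summing the resulting $O(q^{1/2})$ over the roughly $q/d$ cosets merely reproduces the old bound $q^{3/2}/d$ of~\eqref{eq:Large d}, not the claimed $qd^{-1/2}$. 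Your closing paragraph concedes that the decisive balancing step is not done, and indeed the proposal as written does not reach the stated exponent.

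The missing idea is amplification by averaging rather than by partition. Let $\cH_d$ be the subgroup of $d$-torsion points of $\E(\F_q)$; the group structure $\E(\F_q)\cong\Z_M\times\Z_L$ gives $\#\cH_d\ge d$. For every fixed $Q\in\cH_d$ the substitution $P\mapsto P\oplus Q$ permutes $\E(\F_q)$ and leaves $x(n(P\oplus Q))=x(nP)$ unchanged (since $nQ=\cO$), so one may average over all $Q\in\cH_d$ while keeping $P$ ranging over the \emph{entire} group:
$$
S_{n}(\psi;a,b)=\frac{1}{\#\cH_d}\sum_{P\in\E(\F_q)}\sum_{Q\in\cH_d}\psi\(bx(nP)\)\,\psi\(ax(P\oplus Q)\).
$$
Applying Theorem~\ref{thm:BillSum-A} with $\cP=\E(\F_q)$, $\rho(P)=\psi(bx(nP))$ (so $R\ll q$) and $\cQ=\cH_d$, $\vartheta\equiv 1$ (so $T=\#\cH_d$) gives $(\#\cH_d)^{-1}\sqrt{q\cdot q\cdot\#\cH_d}=q(\#\cH_d)^{-1/2}\le qd^{-1/2}$. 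The quantitative point you were missing is that the deliberate $\#\cH_d$-fold over-counting is charged by the bilinear bound only as $(\#\cH_d)^{1/2}$ (it enters through an $L^2$ mass under a square root), while the normalization out front recovers a full factor $\#\cH_d$; the redundancy is therefore a net gain of $(\#\cH_d)^{1/2}$, which is exactly the saving $d^{-1/2}$ you could not locate.
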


\begin{proof} 
Let $\cH_d \subseteq  \E(\F_{q})$ be the 
subgroup of $ \E(\F_{q})$ consisting of the $d$-torsion 
points $Q\in \E(\F_{q})$,  that is, of points $Q$
with $dQ = \cO$. 

It is well-known, see~\cite{ACDFLNV,BSS,Silv}, that  the group 
$\E(\F_{q})$ is isomorphic to
\begin{equation}
\label{eq:Group Struct}
\E(\F_q) \cong \Z_M \times \Z_L
\end{equation}
for some unique integers $M$ and $L$ with
\begin{equation}
\label{eq:Divisibility}
     L \mid M, \qquad LM = \E(\F_{q}),\qquad L \mid q-1.
\end{equation}

Since $d \mid \#\E(\F_q)$ we see from~\eqref{eq:Group Struct}
and~\eqref{eq:Divisibility} that 
we can write $d = d_1 d_2$ where $d_1 =\gcd(d,M)$
and $d_2 \mid d_1$. It is now easy to see 
that 
\begin{equation}
\label{eq:Hd}
 \# \cH_d \ge d, 
\end{equation}
(clearly $\cH_d$ is a subgroup of the group $\E[d]$ of $d$-torsion 
points on $\E$, thus we also have $ \# \cH_d \le d^2$, 
see~\cite{ACDFLNV,BSS,Silv}).

For any point $Q\in \E(\F_{q})$ we have
$$
S_{n}(\psi; a,b)   =  \sum_{P \in \E(\F_q)} 
 \psi\(ax(P\oplus Q) +b x(n(P\oplus Q))\).
$$
Therefore, we obtain
\begin{eqnarray*}
S_{n}(\psi; a,b)  & = & \frac{1}{\# \cH_d}\sum_{Q \in \cH_d} \sum_{P \in \E(\F_q)} 
\psi\(ax(P\oplus Q) +b x(n(P\oplus Q))\)\\
& = &  \frac{1}{\# \cH_d}\sum_{Q \in \cH_d} \sum_{P \in \E(\F_q)} 
 \psi\(ax(P\oplus Q)+bx(nP)\)\\
 & = &  \frac{1}{\# \cH_d} \sum_{P \in \E(\F_q)}\sum_{Q \in \cH_d}  \psi\(bx(nP)\)
\psi\(a x(P\oplus Q)\).
\end{eqnarray*}
Now applying Theorem~\ref{thm:BillSum-A} with $\cP = \E(\F_q)$ and $\cQ =\cH_d$, we have
$$
|S_{n}(\psi; a,b) | \ll  \frac{1}{\# \cH_d} \(q^2 \# \cH_d\)^{1/2}\ll \frac{q}{d^{1/2}}, 
$$
which concludes the proof. 
\end{proof}

Note that for $d\le q$, Theorem~\ref{thm:Sum 2} is an improvement 
on~\eqref{eq:Large d} and ~\eqref{eq:Small d}. If $d> q$, then from
the fact that $\#\E(\F_q)\le q+1+2\sqrt{q}$, see~\cite[Chapter~5, Theorem~1.1]{Silv}, 
it follows that
$d=\#\E(\F_q)$ and hence in this case from~\eqref{eq:Large d} 
and Theorem~\ref{thm:Sum 2} we have
$$
S_{n}(\psi; a,b) = \sum_{P \in \E(\F_q)} 
 \psi\(ax(P) +b x(nP)\)=\sum_{P \in \E(\F_q)} 
 \psi\(ax(P)\)\ll \sqrt{q}.
$$

\subsection{Sums with the elliptic curve power generator}

We now improve the  results of~\cite{BFGS,ElMaSh} on the distribution of  the {\it power
generator\/} on elliptic curves. Namely, given a point $G \in 
\E(\F_q)$ of order
$t$, we fix an integer $e$ with $\gcd(e,t) =1$, put $W_0 = G$ and consider the
sequence 
\begin{equation} \label{ECPow} 
W_n =  eW_{n-1}, \qquad n = 
1, 2, \ldots.
\end{equation}
In a more explicit form we have $W_n = e^n G$. Clearly, the sequence $W_n$ is periodic with period $T$ which is the multiplicative order of $e$ modulo $t$.

For a point $G\in\E(F_q)$, a nonprincipal additive character $\psi$ of $\F_q$
and an integer $N$, we consider character sums 
$$
S(G,\psi,N)) = \sum_{n=0}^{N-1} \psi \(x(W_n)\) 
$$ 
with the sequence~\eqref{ECPow}.

For $N = T$ the sum $S(G,\psi,T)$ is estimated in~\cite{LanShp1},
where it is shown that for any fixed positive integer $\nu$,
we have
$$
S(G,\psi,T) \ll
T ^{1 - (3\nu + 2)/2\nu(\nu + 2)}
t^{(\nu + 1)/\nu (\nu+2)}  q^{1/4(\nu + 2)}.
$$

In~\cite{ElMaSh}, using two different approaches the above result is extended to incomplete sums $S(G,\psi,N)$ with $N\le T$. 
One of the approaches has led  to
\begin{equation}
\label{Old-short-bound}
 S (G,\psi,N) 
  \ll N^{1 - (3\nu + 2)/2\nu(\nu + 3)}
t^{(\nu+1)/\nu(\nu+3)}q^{1/4(\nu+3)}, 
\end{equation}
while the other one has yielded  
\begin{equation}
\label{Old-long-bound}
  S (G,\psi,N)   \ll
T ^{1 - (3\nu + 2)/2\nu(\nu + 2)}
t^{(\nu + 1)/\nu (\nu+2)}  q^{1/4(\nu + 2)} \log q.
\end{equation}

Notice that the  bound~\eqref{Old-short-bound}  is stronger than~\eqref{Old-long-bound} 
for  short sums but for almost complete sums, the bound~\eqref{Old-long-bound}  is 
stronger.

Here using Lemma~\ref{lem:BillSum-M} and an inductive argument, we give a bound 
that improves both~\eqref{Old-short-bound} and~\eqref{Old-long-bound}. 

\begin{theorem}\label{Short-powergenerator}
\label{Short} 
Let $\E$ be an ordinary elliptic curve defined over $\Fq$ and let $N \le T$.
Suppose that for some fixed
$\varepsilon > 0$ we have $t \ge q^{1/2 + \varepsilon}$.
Then for any fixed integer $\nu \ge 1$ there exists $C(\nu,\varepsilon)\ge 1$ depending only on $\nu$ and $\varepsilon$ such that 
$$
 S (G,\psi,N)  
  \le C(\nu,\varepsilon) N^{1 - (3\nu + 2)/2\nu(\nu + 2)}
t^{(\nu+1)/\nu(\nu+2)}q^{1/4(\nu+2)}(\log q)^{1/(\nu+2)}.
$$
\end{theorem}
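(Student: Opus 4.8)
The plan is to convert the incomplete sum $S(G,\psi,N)$ into a complete bilinear sum of multiplicative type by a shift-and-multiply device, estimate that bilinear sum through Lemma~\ref{lem:BillSum-M}, and control the resulting boundary terms by a strong induction on the length $N$. First I would fix an integer $K$ with $1\le K\le N$ and exploit $W_{k+n}=e^k W_n$: for the genuine sets $\cA=\{e^k\bmod t:0\le k<K\}$ and $\cB=\{e^n\bmod t:0\le n<N\}$ (both subsets of $\Z_t^*$ of sizes $K$ and $N$, since $K\le N\le T$ and $\gcd(e,t)=1$), one has the exact identity
$$
\sum_{k=0}^{K-1}\sum_{n=0}^{N-1}\psi\(x(W_{k+n})\)=U_{\alpha,\beta}(\psi,\cA,\cB;G),\qquad \alpha_a=\beta_b=1.
$$
Writing $m=n+k$ and comparing the inner range $[k,N+k-1]$ with $[0,N-1]$, the left side equals $K\,S(G,\psi,N)$ plus a boundary contribution $\sum_{k=0}^{K-1}\(S(e^NG,\psi,k)-S(G,\psi,k)\)$; the crucial observation is that each boundary term is itself a power-generator sum of length $k<K\le N$ based at a point ($e^NG$ or $G$) of the same order $t$.

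Applying Lemma~\ref{lem:BillSum-M} to $U_{\alpha,\beta}$ with point order $t$, $\#\cA=K$ and $\#\cB=N$, I would obtain
$$
K\,|S(G,\psi,N)|\ll K^{1-1/2\nu}N^{1-1/(\nu+2)}B+\sum_{k=0}^{K-1}\(|S(e^NG,\psi,k)|+|S(G,\psi,k)|\),
$$
where $B=t^{(\nu+1)/\nu(\nu+2)}q^{1/4(\nu+2)}(\log q)^{1/(\nu+2)}$. The arithmetic to notice is that the target exponent $\gamma=1-(3\nu+2)/2\nu(\nu+2)$ satisfies $\gamma=(\nu+1)/(\nu+2)-1/2\nu$, so after dividing by $K$ the main term becomes $(K/N)^{-1/2\nu}N^{\gamma}B$; choosing $K=\fl{\theta N}$ for a small constant $\theta=\theta(\nu)$ keeps it proportional to $N^{\gamma}B$ while leaving room in the boundary.

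I would then run a strong induction on $N$, uniformly over all base points of order $t$, with hypothesis $|S(\cdot,\psi,N')|\le C N'^{\gamma}B$ for all $N'<N$. Feeding this into the boundary sum gives a contribution $\ll CB\sum_{k<K}k^{\gamma}\ll CB K^{1+\gamma}$, so dividing through by $K\asymp\theta N$ turns the two displays into $|S(G,\psi,N)|\le\(c_0\theta^{-1/2\nu}+c_1C\theta^{\gamma}\)N^{\gamma}B$ with $c_0,c_1$ depending only on $\nu$. Fixing $\theta$ small enough that $c_1\theta^{\gamma}\le\tfrac12$, and then $C$ large enough that $c_0\theta^{-1/2\nu}\le\tfrac12C$, closes the induction; the finitely many small $N$ (for which $\fl{\theta N}=0$) are dispatched by the trivial bound $|S|\le N$ together with $B\ge1$.

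The main obstacle is exactly this boundary error: a one-shot estimate with $K\asymp N$ reproduces the correct power $N^{\gamma}B$ but leaves an additive term of size $\asymp N$ that is not in general dominated by it, and it is the self-similar nature of the boundary pieces—shorter power-generator sums of the same shape and the same $B$—that lets the induction absorb them while preserving the exponent $\gamma$. The induction itself yields a constant depending only on $\nu$; the hypothesis $t\ge q^{1/2+\varepsilon}$ is what makes the final estimate nontrivial, since for $N$ near $T$ a short check using $\#\E(\F_q)\le q+1+2\sqrt q$ and $T\le t$ shows the bound beats the trivial one precisely when $t\gg q^{1/2}$, which is why recording the constant as $C(\nu,\varepsilon)$ is safe.
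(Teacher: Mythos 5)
Your proposal is correct and follows the same strategy as the paper's proof: the shift identity $W_{k+n}=e^kW_n$ turns $K$ translates of the incomplete sum into the bilinear sum $U_{\alpha,\beta}(\psi,\cA,\cB;G)$ with $\cA=\{e^k\}$, $\cB=\{e^n\}$, Lemma~\ref{lem:BillSum-M} bounds that sum, and the boundary pieces $S(G,\psi,k)$ and $S(e^NG,\psi,k)$ with $k<K$ are absorbed by strong induction on the length (the paper likewise observes that the induction hypothesis applies to $e^NG$ since it also has order $t$). The one substantive difference is at the point where the induction is closed, and there your version is the correct one. The paper takes the shift range $M=\lceil N/2\rceil$ and bounds the boundary contribution by $2MCM^{\gamma}B$ with $\gamma=1-(3\nu+2)/2\nu(\nu+2)$; after dividing by $M$ this gives $2C(N/2)^{\gamma}B=2^{1-\gamma}CN^{\gamma}B$, which already exceeds the target $CN^{\gamma}B$ because $\gamma<1$, and correspondingly the paper's closing constant $C=2^{1/2\nu}D/\bigl(1-2^{(3\nu+2)/2\nu(\nu+2)}\bigr)$ has a negative denominator. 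Your choice $K=\lfloor\theta N\rfloor$ with $\theta=\theta(\nu)$ small, together with the sharper estimate $\sum_{k<K}k^{\gamma}\ll K^{1+\gamma}$, makes the boundary term $\ll C\theta^{\gamma}N^{\gamma}B$ with $\theta^{\gamma}$ as small as desired, so the absorption genuinely works and the constant comes out positive. Your base case (bounded $N$, trivial bound $|S|\le N$ plus $B\ge 1$) also suffices; the paper instead starts the induction at $N\le q^{1/2}$ using $t\ge q^{1/2+\varepsilon}$, which is the only place that hypothesis enters the argument.
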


\begin{proof}
Our proof is based on an induction. 

Notice that if $N\le q^{1/2}$, then since $t \ge q^{1/2 + \varepsilon}$ we have
$$
N^{1 - (3\nu + 2)/2\nu(\nu + 2)}
t^{(\nu+1)/\nu(\nu+2)}q^{1/4(\nu+2)}(\log q)^{1/(\nu+2)}\ge N,
$$
and thus the claim holds trivially.

Now suppose that the claim is true for all $k<N$, and hence there exists $C(\nu, \varepsilon)$, which is
to be determined later, depending only on $\nu$ and $\varepsilon$, so that for all $k< N$, we have
$$
 S (G,\psi,k)  
  \le C(\nu,\varepsilon) k^{1 - (3\nu + 2)/2\nu(\nu + 2)}
t^{(\nu+1)/\nu(\nu+2)}q^{1/4(\nu+2)}(\log q)^{1/(\nu+2)}.
$$

Let $\cM = \{0, \ldots, M-1\}$ where $M<N$.
For every $m\in \cM$, we have 
$$
S(G,\psi,N)= \sum_{n=0}^{N-1} \psi \(x(e^{n+m} G)\)+S(G,\psi,m)-
S(H,\psi,m), 
$$
where $H=e^NG$, and hence 
$$
\sum_{m=0}^{M-1}S(G,\psi,N)= \sum_{m=0}^{M-1}\sum_{n=0}^{N-1} \psi \(x(e^{n+m} G)\)+\sum_{m=0}^{M-1}S(G,\psi,m)-
\sum_{m=0}^{M-1}S(H,\psi,m). 
$$
Notice that our bounds hold for any point of order $t$, and thus
using the fact that $\gcd(e,t) =1$ we can apply the induction hypothesis 
to the point $H$ too. Hence by the induction hypothesis we have
\begin{equation*}
\begin{split}
M |S&(G_0,\psi,N)|\\
&\le W+ 2MC(\nu,\varepsilon)M^{1 - (3\nu + 2)/2\nu(\nu + 2)}
t^{(\nu+1)/\nu(\nu+2)}q^{1/4(\nu+2)}(\log q)^{1/(\nu+2)},
\end{split}
\end{equation*}
where
$$
W =\left|\sum_{m=0}^{M-1} \sum_{n=0}^{N-1} \psi \(x(e^{n+m} G)\)\right|.  
$$
Applying Lemma~\ref{lem:BillSum-M}, we get
$$
W\le D(\nu,\varepsilon) (M)^{1-1/2\nu} (N)^{1-1/(\nu+2)}t^{(\nu+1)/\nu(\nu+2)}q^{1/4(\nu+2)}(\log q)^{1/(\nu+2)}
$$
for some $D(\nu,\varepsilon)$ depending only on $\nu$ and $\varepsilon$.
From the two inequalities above, we have
\begin{equation*}
\begin{split}
|S&(G_0,\psi,N)|\\
& \le D(\nu,\varepsilon) M^{-1/2\nu}N^{1-1/(\nu+2)}t^{(\nu+1)/\nu(\nu+2)}q^{1/4(\nu+2)}(\log q)^{1/(\nu+2)}\\
&\qquad +2C(\nu,\varepsilon)M^{1 - (3\nu + 2)/2\nu(\nu + 2)}
t^{(\nu+1)/\nu(\nu+2)}q^{1/4(\nu+2)}(\log q)^{1/(\nu+2)}.
\end{split}
\end{equation*}

We see that it suffices to take $M=\lceil N/2\rceil$ and
$$
C(\nu,\varepsilon)=\frac{2^{1/2\nu}}{1-2^{(3\nu+2)/2\nu(\nu+2)}}D(\nu,\varepsilon)
$$
to conclude the proof. \end{proof}

Notice that when $t=q^{1+o(1)}$ which is the most interesting case, taking $\nu$ to be a very large number shows that the
bound in Theorem~\ref{Short-powergenerator} is stronger than the bound~\eqref{Old-short-bound} whenever
$N\ge q^{{5/6}+\varepsilon}$ for some fixed $\varepsilon > 0$.

\section{Comments}

Dvir~\cite{Dvir} has considered the problem of constructing randomness extractors 
for algebraic varieties. In general terms the problem can be described as follows.
Given an algebraic variety $\cV$ over $\F_q$ and one or several sources 
of random but not necessarily uniformly generated points on $\cV$, design 
an algorithm to generate  long strings  of random bits with a distribution 
that is close to uniform. 
The construction of~\cite{Dvir} requires only one but rather uniform source of 
points on $\cV$. In the case when $\cV = \cE$, the result of Theorem~\ref{thm:BillSum-A}
has a natural interpretation as a two-source extractor from two biased 
sources of points $P$ and $Q$, respectively. Say, if $q=p$, then one can use most significant bits
of $x(P\oplus Q)$ (in some standard representation of the residues modulo $p$).
The exact number of output bits depends on the bias of the sources of points 
$P$ and $Q$. 

We also remark that many of our results have direct analogues for sums with
multiplicative characters.

\section{Acknowledgements}

During the preparation of this paper,
O.A. was  supported in part by a grant from IPM  Grant~91050418 (Iran)
and I.~S. by ARC Grant~DP130100237 (Australia) and by NRF Grant~CRP2-2007-03 (Singapore). 

A portion of this work was done when the authors were visiting the University of Waterloo;
the support and hospitality of this institution are gratefully acknowledged.


\begin{thebibliography}{9999}

\bibitem{AhmShp} O. Ahmadi and I.~E.~Shparlinski,  
`Bilinear character sums and  the  sum-product problem 
on elliptic curves', {\it Proc. Edinb. Math. Soc.\/}, 
{\bf 53} (2010), 1--12.


\bibitem{ACDFLNV}
R. Avanzi, H.~Cohen, C.~Doche, G.~Frey, T.~Lange, K.~Nguyen and
F. Vercauteren,
{\it Elliptic and hyperelliptic curve cryptography: Theory and practice\/},
CRC Press, 2005.

\bibitem{BFGS} W. D.~Banks, J. B. Friedlander,  M. Z. Garaev  and
I.~E.~Shparlinski,  `Double character sums over elliptic curves and
finite fields', 
{\it Pure and Appl. Math. Quart.\/}, {\bf 2} (2006),   179--197.

\bibitem{BOS} S. Blackburn, A. Ostafe and
I.~E.~Shparlinski,  `On the distribution of the subset sum pseudorandom number  
generator on elliptic curves',  
{\it Unif. Distrib. Theory\/}, {\bf 6} (2011), 127--142. 


\bibitem{BSS}
I. Blake, G. Seroussi and N. Smart, {\it Elliptic curves in
cryptography\/}, London Math. Soc., Lecture Note Series, {\bf
265}, Cambridge Univ. Press, 1999.

\bibitem{Bomb} E. Bombieri, `On exponential sums in finite fields',
{\it Amer. J. Math.\/},    {\bf 88} (1966), 71--105.
          
\bibitem{Bosma} W.~Bosma, `Signed bits and fast exponentiation', {\it
                J. Th\'eorie des Nombres Bordeaux\/}, {\bf 13} (2001), 27--41.

\bibitem{Chen} Z. Chen, 
`Elliptic curve analogue of Legendre sequences',  
{\it Monatsh Math.\/}, {\bf 154} (2008), 1--10.

\bibitem{CFPZ}
C. Chevalier, P.-A. Fouque, D. Pointcheval and S. Zimmer, 
`Optimal randomness extraction
from a Diffie-Hellman element', 
 {\it Proc Eurocrypt 2009, Lect.  Notes in
    Comp. Sci.\/}, Springer-Verlag, Berlin, {\bf 5479} (2009),   572--589. 
    
\bibitem{CiSo}  A. A. Ciss and  D. Sow, `Randomness extraction in 
elliptic curves and secret key derivation at the end of DiffieÐHellman protocol', 
{\it Intern. J.  Appl. Cryptography\/}, {\bf 2} (2012), 360--365.


\bibitem{Dvir} Z. Dvir, `Extractors for varieties', 
{\it Comput. Complex.\/}, {\bf 21} (2012), 515--572. 

\bibitem{ElMaSh}  E. El Mahassni and  I. E. Shparlinski,
`On the distribution of the elliptic curve power generator',  
{\it Proc. 8th Conf. on Finite Fields and Appl.\/}, 
Contemporary Math., {\bf 461}, Amer. Math. Soc., 
2008, 111--119.  

\bibitem{FarShp} R. R. Farashahi  and I. E. Shparlinski, 
`Pseudorandom bits from points on elliptic curves',
{\it IEEE Trans. Inform. Theory\/}, {\bf 58} (2012), 1242--1247.

  \bibitem{FFSTV}
R.~R.~Farashahi, P.-A.~Fouque, I.~E.~Shparlinski, M.~Tibouchi and J.~F.~Voloch,
`Indifferentiable  deterministic hashing 
to elliptic and hyperelliptic curves', 
{\it Math. Comp.\/}, {\bf 82} (2013),  491--512. 

\bibitem{GLV} R.~P.~Gallant, R.~J.~Lambert and S.~A.~Vanstone,
    `Faster point multiplication on elliptic curves with efficient
    endomorphisms', {\it Proc Crypto 2001, Lect.  Notes in
    Comp. Sci.\/}, Springer-Verlag, Berlin, {\bf 2139} (2001), 190--200.
    


\bibitem{Kob} N.~Koblitz, `CM curves with good cryptographic
             properties', {\it Proc. Crypto 1991, Lect. Notes in Comp.  Sci.\/},
             Springer-Verlag, Berlin, {\bf 576} (1992), 279--287.
 
\bibitem{KohShp} D. R. Kohel and I. E. Shparlinski, `Exponential sums and
group generators for  elliptic curves over finite fields',
{\it Proc. the 4th  Algorithmic Number Theory  Symp., Lect. Notes
in Comp. Sci.\/}, Springer-Verlag, Berlin, {\bf 1838} (2000),
395--404.

\bibitem{LanShp1} T. Lange and I. E. Shparlinski,
`Certain exponential sums and random walks on elliptic curves',
{\it Canad. J. Math.\/}, {\bf 57}  (2005),   338--350.

\bibitem{LanShp2} T.~Lange and I.~E.~Shparlinski,
`Distribution of some sequences of points on elliptic curves',
{\it J. Math. Cryptology\/}, {\bf 1} (2007), 1--11.

\bibitem{Liu} H. Liu, `A family of elliptic curve pseudorandom binary sequences', 
{\it Designs, Codes and Cryptography\/},  (to appear). 

\bibitem{Mer0} L.~M{\'e}rai, 
`Construction of pseudorandom binary lattices using elliptic curves',
{\it Proc. Amer. Math. Soc.\/}, {\bf 139} (2011),  407--420.

\bibitem{Mer1} L.~M{\'e}rai, 
`Construction of pseudorandom binary sequences over elliptic
curves using multiplicative characters', 
{\it Publ. Math. Debrecen.\/}, {\bf 80} (2012), 199--213.

\bibitem{Mer2} L.~M{\'e}rai, 
`Remarks on pseudorandom binary sequences over elliptic curves', 
{\it Fund. Inform.\/}, {\bf 114} (2012), 301--308. 

\bibitem{NgShp} P. Q. Nguyen and I.~E.~Shparlinski,
`The insecurity of the elliptic curve Digital
Signature  Algorithm with partially known
nonces', {\it Designs, Codes and Cryptography\/}, 
{\bf 30} (2003), 201--217.
 
\bibitem{OstShp1} A. Ostafe and I.~E.~Shparlinski,
`Twisted exponential sums over points of elliptic curves', 
{\it Acta Arith.\/},  {\bf 148} (2011), 77--92.

\bibitem{OstShp2}
 A. Ostafe and I.~E.~Shparlinski,  
`Exponential sums over points of elliptic curves with reciprocals of primes', 
{\it Mathematika\/}, {\bf 58} (2012), 21--33.

\bibitem{Sark} A. S{\'a}rk{\"o}zy, `On sums and
products of residues modulo $p$',
{\it  Acta Arith.\/},  {\bf 118} (2005),  403--409.

 \bibitem{Shp1}
I.~E.~Shparlinski,  `Bilinear  character sums 
over elliptic curves', {\it Finite Fields and 
Their Appl.\/}, {\bf 14} (2008), 132--141.

\bibitem{Shp2}
I.~E.~Shparlinski,  `On the elliptic curve analogue of the 
sum-product problem', {\it Finite Fields and 
Their Appl.\/}, {\bf 14} (2008), 721--726.

 \bibitem{Shp3}
I.~E.~Shparlinski,  `Some special character sums over 
elliptic curves', {\it Bol. Soc. Matem. Mexicana\/},  
{\bf 15} (2009),   37--40.

\bibitem{Shp4} I. E. Shparlinski, `Pseudorandom number generators 
from elliptic curves',   {\it Recent Trends in Cryptography.\/}, 
Contemp. Math, vol.~477, Amer. Math. Soc., (2009), 121--141. 

\bibitem{ShpVol} I. E. Shparlinski and J. F.~Voloch, 
`Generators of elliptic curves over finite fields',
{\it Preprint\/}, 2011.

\bibitem{ShpWint} I. E. Shparlinski and A. Winterhof,
`Constructions of approximately mutually unbiased bases.
{\it Proc. the 7th Latin American Theoretical Informatics
Conf.,  Lect. Notes in Comp.  Sci.\/},  
Springer-Verlag, Berlin,  {\bf 3887} (2006), 793--799.

\bibitem{Silv} J. H. Silverman, {\it The arithmetic of elliptic
curves\/}, Springer-Verlag, Berlin, 2009.

\end{thebibliography}
\end{document}